\newcommand{\cal}{\mathcal}
\newcommand{\rd}{{\mathbb R}^d}
\newcommand{\R}{{\mathbb R}}
\newcommand{\N}{{\mathbb N}}
\newcommand{\ep}{\varepsilon}
\newcommand{\diam}{{\rm diam}\,}
\newcommand{\E}{{\mathbb E}}
\newcommand{\Ha}{{\cal H}}
\newcommand{\ind}[1]{\mathbf{1}_{#1}}
\newcommand{\eps}{r}
\newtheorem{Theorem}{Theorem}
\newtheorem{Proposition}[Theorem]{Proposition}
\newtheorem{Lemma}[Theorem]{Lemma}
\newtheorem{Corollary}[Theorem]{Corollary}
\newtheorem{example}[Theorem]{Example}
\numberwithin{equation}{section} \numberwithin{Theorem}{section}
\begin{document}
\title{On volume and surface area of parallel sets}
\author{Jan Rataj}
\address{Charles University, Faculty of Mathematics and Physics, Sokolovsk\'a 83, 18675 Praha~8, Czech Republic}
\author{Steffen Winter}
\address{Karlsruhe Institute of Technology, Department of Mathematics, 76128 Karlsruhe, Germany}
\thanks{The first author was supported by the Czech Ministry of Education, project no.\ MSM 0021620839. The paper was completed while both authors were supported by a cooperation grant of the Czech and the German science foundation, GACR project no.\ P201/10/J039 and DFG project no.\ WE 1613/2-1.} 
\date{\today}
\subjclass[2000]{28A75, 28A80, 52A20, 60D05}
\keywords{parallel set, surface area, Minkowski content, Minkowski dimension, self-similar set, random set}
\begin{abstract}
The $r$-parallel set to a set $A$ in a Euclidean space consists of all points with distance at most $r$ from $A$. We clarify the relation between the volume and the surface area of parallel sets and study the asymptotic behaviour of both quantities as $r$ tends to $0$. We show, for instance, that in general, the existence of a (suitably rescaled) limit of the surface area implies the existence of the corresponding limit for the volume, known as the Minkowski content. A full characterisation is obtained for the case of self-similar fractal sets. Applications to stationary random sets are discussed as well, in particular, to the trajectory of the Brownian motion. 
\end{abstract}
\maketitle

\section{Introduction}
For $r>0$, the $r$-parallel set $A_r$ of a subset $A$ of $\rd$ is the set of all points with distance at most $r$ from $A$. As $r$ tends to $0$, the parallel sets $A_r$ approximate the closure of $A$.  The volume $V_A(r)$ of $A_r$ was investigated by Kneser \cite{Kneser} and later by Stach\'o \cite{Stacho} who also studied the relation to the $(d-1)$-dimensional content of the boundary $\partial A_r$. Recently, Hug et al.\ \cite{HLW04} derived a generalized Steiner formula for closed sets in $\rd$ and obtained as a corollary relations for the volume and surface area of parallel sets strengthening those of Stach\'o. 

Also in fractal geometry, parallel sets play an important role. Minkowski content and Minkowski dimension of $A$ describe the asymptotic behaviour of the volume of $A_r$, as $r\to 0$. The Minkowski dimension (which is equivalent to the box dimension) is an important tool in applications. The Minkowski measurability of a set $A$, i.e.\ the existence of its Minkowski content as a positive and finite number, is deeply connected with the spectral theory on domains "bounded" by $A$, cf.~\cite{lapidus} and the references therein. For self-similar sets in $\R^d$, Gatzouras \cite{gatzouras} gave a characterization of Minkowski measurability and derived formulas for the Minkowski content (in case it exists) and some suitably averaged counterpart. 
The idea of approximation with parallel sets has also been used to introduce certain other geometric quantities for fractal sets:
Winter \cite{winter} and Z\"ahle \cite{Za09} considered (total) curvature measures of the parallel sets (whenever defined in a generalized sense) and introduced appropriately rescaled limits (as $r\to 0$) as fractal curvatures.

Parallel sets have also been used to approximate the highly irregular trajectory of the Brownian motion. Formulas for the mean volume of the parallel sets are known for decades. Recently, also the mean surface area has been investigated (\cite{RSS07}), as well as other curvature functionals (\cite{Last06}, \cite{RSM09}). 

In this note we investigate more deeply the connection between the volume $V_A(r)$ and the surface area $S_A(r):=\Ha^{d-1}(\partial A_r)$ of parallel sets to a set $A\subset\rd$ ($\Ha^{d-1}$ denotes the $(d-1)$-dimensional Hausdorff measure). In Section~2, we strengthen slightly a result from \cite{HLW04}, using a rectifiability argument, and obtain that $V_A'(r)=S_A(r)$, up to countably many $r$'s. In Section~3 we use this result to compare the asymptotic behaviour of surface area and volume. We introduce here, in analogy to the Minkowski content, the \emph{surface area based content} and \emph{surface area based dimension} which under additional assumptions coincide with the Minkowski's quantities. To illustrate this relation, consider the case when $A$ is a $(d-1)$-dimensional $C^2$ smooth compact submanifold of $\rd$. Then, both $V_A(r)/2r$ and $S_A(r)/2$ converge to the same limit as $r\to 0$, namely to the $(d-1)$-dimensional Minkowski content of $A$ which equals $\Ha^{d-1}(A)$ in this case. We show that some analogous results hold for general compact sets with zero volume (and arbitrary dimension). This is closely related to a conjecture that has been communicated by Martina Z\"ahle to the authors: If for a self-similar set $A$ of dimension $D$ the total curvatures $C_k(A_r)$ of the parallel sets are defined, then the (appropriately rescaled and averaged) limits of these quantities coincide for all integers $k>D-1$. 
In Section~4, we study the class of self-similar sets more closely and show that the aforementioned surface area based content (the limit of the total curvatures of order $d-1$) coincides with the Minkowski content
provided the set is non-arithmetic while the corresponding averaged limits coincide in general; this  partially confirms the above conjecture. It also extends and sheds a new light on some results in \cite{winter}.
Finally, Section~5 deals with mean values for stationary random closed sets. As particular applications, we strengthen the results on mean surface area of the parallel set to the Brownian motion from \cite{Last06} and \cite{RSS07}, and derive some estimates on the asymptotics of the surface area.

\section{Surface area content of the parallel sets}
Let $A$ be a bounded subset of $\rd$ and $r>0$. Denote by $d_A$ be the (Euclidean) distance function of the set $A$, and by $A_r$ and $A_{<r}$  the closed and open $r$-parallel neighbourhood of $A$, respectively, i.e.,
$$A_r=\{ z\in\rd:\, d_A(z)\leq r\},\quad
A_{<r}=\{ z\in\rd:\, d_A(z)<r\}.$$
Finally, let $V_A(r)={\cal H}^d(A_r)$ be the volume of the $r$-parallel neighbourhood.

Stach\'o \cite{Stacho} showed (using the results of Kneser \cite{Kneser}) that the left and right derivatives $(V_A)'_+(r)\leq (V_A)'_-(r)$ exist at any $r>0$ and are equal up to countably many $r$'s. He also showed that for any $r>0$,
\begin{equation} \label{Stach}
{\cal M}^{d-1}(\partial A_{<r})=\frac{(V_A)'_-(r)+(V_A)'_+(r)}2,
\end{equation}
where ${\cal M}^{d-1}$ is the $(d-1)$-dimensional Minkowski content. Recently, Hug, Last and Weil \cite{HLW04} proved a generalized Steiner formula for closed sets and they obtained as corollary the relation (see \cite[Corollary~4.6]{HLW04})
$$(V_A)'_+(r)={\cal H}^{d-1}(\partial_+A_r),\quad r>0,$$
where ${\cal H}^{d-1}$ is the $(d-1)$-dimensional Hausdorff measure and $\partial_+Z$ is the set of all boundary points $z\in Z$ for which there exists a point $y\not\in Z$ with $d_Z(y)=|y-z|$. We thus have the inequalities
\begin{eqnarray*}
&(V_A)'_+(r)&\leq {\cal M}^{d-1}(\partial A_{<r})\leq (V_A)'_-(r)\\
&\parallel&\\
&{\cal H}^{d-1}(\partial_+A_r)&\leq{\cal H}^{d-1}(\partial A_r)\leq {\cal H}^{d-1}(\partial A_{<r}).
\end{eqnarray*}
The following two examples confirm that none of the inequalities above can be replaced by equality.

\begin{example} \rm
Let $A$ be the union of two unit parallel line segments of distance $2r$ in $\R^2$. Then ${\cal H}^{1}(\partial A_r)=2+4\pi r<3+4\pi r={\cal H}^{1}(\partial A_{<r})$. Also, $(V_A)'_+(r)=2+4\pi r<3+4\pi r=(V_A)'_-(r)$, which, together with \eqref{Stach}, implies that the inequalities on the first line of the diagram are sharp.
\end{example}

\begin{example} \rm
Let $C$ be a totally disconnected compact subset with positive one-dimensional measure of the segment $[(0,0),(1,0)]$ in $\R^2$ and let $A=C\cup (C+2re_2)$, where $e_2=(0,1)$. Then $C+re_2$ belongs to the boundary $\partial A_r$, but not to $\partial_+ A_r$, hence, ${\cal H}^1(\partial_+ A_r)<{\cal H}^1(\partial A_r)$.
\end{example}

Hug et al.\ claim in \cite[p.~257]{HLW04} that $\partial A_{<r}$ need not be $(d-1)$-rectifiable and, hence, the equality of the Minkowski content and Hausdorff measure does not follow in general. However, the counterexamples of Ferry \cite{Ferry} mentioned in \cite[p.~257]{HLW04} only show that $\partial A_{<r}$ need not be a $(d-1)$-manifold and do not disclaim rectifiability. We shall show that, in fact, $\partial A_{<r}$ is $(d-1)$-rectifiable for all $r>0$ and, as a consequence, derive the equivalence of Minkowski content and Hausdorff measure of $\partial A_{<r}$ in general.
We recall that a set is $k$-rectifiable if it is a Lipschitz image of a bounded subset of $\R^k$.

\begin{Proposition} \label{T1}
If $A\subseteq\rd$ is bounded then $\partial A_{<r}$ and $\partial A_r$ are $(d-1)$-rectifiable for any $r>0$.
\end{Proposition}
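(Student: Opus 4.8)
The plan is to reduce both claims to a single statement about the level set $L_r:=\{z\in\rd:\,d_A(z)=r\}$, namely that $L_r$ is $(d-1)$-rectifiable. Since $d_A=d_{\overline A}$ and $\overline A$ is compact (as $A$ is bounded), I may assume $A$ compact. By continuity of $d_A$, points with $d_A<r$ are interior to $A_r$ and points with $d_A>r$ are exterior, so $\partial A_r\subseteq L_r$; likewise $\partial A_{<r}\subseteq\overline{\{d_A<r\}}\cap\{d_A\ge r\}\subseteq L_r$. As a subset of a $(d-1)$-rectifiable set is again $(d-1)$-rectifiable, it suffices to treat $L_r$.

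The key tool is the \emph{semiconcavity} of the distance function away from $A$. Writing $d_A^2(z)=|z|^2-\sup_{a\in A}\bigl(2\langle z,a\rangle-|a|^2\bigr)$ exhibits $d_A^2$ as $|z|^2$ minus a convex function, hence $d_A^2$ is semiconcave on all of $\rd$; since $t\mapsto\sqrt t$ is smooth with bounded derivatives on $[r^2/2,\infty)$, the composition $d_A$ is semiconcave on the open set $U:=\{d_A>r/2\}\supseteq L_r$. I would record the standard consequences: at each $z\in U$ the superdifferential is $\partial^+d_A(z)=\cov\{(z-a)/|z-a|:\,a\in\Pi(z)\}$, where $\Pi(z)$ is the nonempty compact set of nearest points of $A$; the one-sided directional derivative equals $\min_{v\in\partial^+d_A(z)}\langle v,e\rangle$; and $d_A$ is differentiable at $z$ exactly when $\Pi(z)$ is a singleton, in which case $|\nabla d_A(z)|=1$.

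Next I would decompose $L_r$ into a \emph{regular} part, handled by a graph argument, and a \emph{critical} part. Fix a countable dense set of unit vectors $(e_i)$ and integers $n$, and set $E_{i,n}:=\{z\in L_r:\,\langle v,e_i\rangle\ge1/n\text{ for all }v\in\partial^+d_A(z)\}$. On $E_{i,n}$ the directional derivative of $d_A$ in direction $e_i$ is at least $1/n$, so along lines parallel to $e_i$ the function $d_A$ is strictly increasing through the level $r$; combined with the global $1$-Lipschitz bound this shows that $L_r$ meets a neighbourhood of each point of $E_{i,n}$ in the graph of a Lipschitz function over $e_i^\perp$, whence $E_{i,n}$ is $(d-1)$-rectifiable. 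The leftover set $\Sigma:=L_r\setminus\bigcup_{i,n}E_{i,n}$ consists precisely of those $z$ for which the unit vectors $(z-a)/|z-a|$, $a\in\Pi(z)$, lie in no open half-space, i.e.\ $0\in\partial^+d_A(z)$; in particular $\Pi(z)$ is not a singleton, so $\Sigma$ is contained in the non-differentiability set of the semiconcave function $d_A$.

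The main obstacle is to control this critical set $\Sigma$ (a piece of the medial axis of $\rd\setminus A$), and here I would invoke the structure theorem that the singular set of a semiconcave function on an open subset of $\rd$ is $(d-1)$-rectifiable. Granting this, $\Sigma$ is $(d-1)$-rectifiable, and since a countable union of $(d-1)$-rectifiable sets is $(d-1)$-rectifiable, so is $L_r=\Sigma\cup\bigcup_{i,n}E_{i,n}$. The containments of the first paragraph then yield the $(d-1)$-rectifiability of $\partial A_r$ and $\partial A_{<r}$. The two points deserving care are the precise passage from ``positive directional derivative plus $1$-Lipschitz'' to an honest Lipschitz graph, and the verification (or citation) that the singular set of a semiconcave function is $(d-1)$-rectifiable.
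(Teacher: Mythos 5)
Your reduction to the level set $L_r=\{d_A=r\}$, the use of semiconcavity of $d_A$ away from $A$, the identification of the critical part $\Sigma$ with $\{z: 0\in\partial^+d_A(z)\}$, and the Lipschitz--graph argument on the sets $E_{i,n}$ are all sound, and this is a genuinely different route from the paper's. The gap is the final step: the assertion that a countable union of $(d-1)$-rectifiable sets is $(d-1)$-rectifiable is false for the notion of rectifiability the paper fixes and actually needs. The paper defines a set to be $(d-1)$-rectifiable if it is the image of a \emph{single} bounded subset of $\R^{d-1}$ under a \emph{single} Lipschitz map (Federer's strong notion). That class is closed under finite unions (place the domains in disjoint, well-separated bounded pieces of $\R^{d-1}$), but not under countable ones: such an image always has finite $\Ha^{d-1}$-measure, while a countable union of rectifiable sets need not, and even with finite total measure one cannot in general glue countably many parametrizations with uncontrolled Lipschitz constants into one. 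Your argument is intrinsically countable in two places: the union over $n$ in $\bigcup_{i,n}E_{i,n}$ (the margin $1/n$ degenerates as one approaches $\Sigma$, so no finite subfamily suffices unless $\Sigma=\emptyset$), and the structure theorem for singular sets of semiconcave functions, which only yields a covering of $\Sigma$ by countably many Lipschitz hypersurfaces. So what you actually prove is that $\partial A_{<r}$ is \emph{countably} $(d-1)$-rectifiable. This weaker conclusion does not suffice for the intended application: Corollary~\ref{Cor1} invokes Federer \S3.2.39, which is stated for closed sets that are $m$-rectifiable in the strong sense; for merely countably rectifiable compact sets the equality of Minkowski content and Hausdorff measure requires additional density hypotheses.

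The paper sidesteps all of this with a finiteness trick: partition $A$ into finitely many pieces $E^j$ of diameter less than $r$, so that $r$ is a regular value of each $d_{E^j}$ (in your language, the critical set $\Sigma$ is empty for each piece); by Fu's theorem each $\partial(E^j)_{<r}$ is then a compact Lipschitz manifold, hence rectifiable, and $\partial A_{<r}\subseteq\bigcup_j\partial(E^j)_{<r}$ is a \emph{finite} union. To salvage your approach you would have to either show that your particular countable union can be reassembled into a single Lipschitz image of a bounded set, or replace Federer \S3.2.39 downstream by a version of the Minkowski-content theorem valid for compact countably rectifiable sets, which requires verifying an upper density bound for $\Ha^{d-1}$ restricted to $\partial A_{<r}$; either way an additional argument is needed.
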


\begin{proof}
Since $\partial A_r\subseteq \partial A_{<r}$, it suffices to prove the rectifiability of $\partial A_{<r}$, and,
since $A_{<r}=\overline{A}_{<r}$ for any $r>0$, we can assume without loss of generality that $A$ is compact.
Given a point $z\not\in A$, denote
$$\Sigma_A(z)=\{ a\in A:\, |z-a|=d_A(z)\},$$
the set of all nearest points of $A$ to $z$. The point $z$ is called {\it regular} if $z$ does not belong to the convex hull of $\Sigma_A$. A number $r>0$ is called a {\it regular value of} $d_A$ if all points of $\partial A_{<r}$ are regular (cf.\ \cite{Ferry}). Fu \cite{Fu85} showed that $\partial A_{<r}$ is a Lipschitz manifold if $r$ is a regular value. It is not difficult to see that if $r>\diam A$ then $r$ is a regular value of $d_A$.\footnote{One can even show that there is no critical value of $d_A$ greater than $\sqrt{d/(2d+2)}\,\diam A$ and this upper bound is attained if $A$ consists of the vertices of a regular $d$-simplex, cf.\ \cite[p.~1038]{Fu85}.} For $r<\diam A$, partition $A$ into finitely many subsets of diameters less than $r$, $A=E^1\cup\cdots\cup E^k$, and note that $\partial A_{<r}\subseteq\partial (E^1)_{<r}\cup\cdots\cup \partial (E^k)_{<r}$. Each of the sets $\partial (E^k)_{<r}$ is a Lipschitz manifold and, since it is compact, it is $(d-1)$-rectifiable. Since rectifiability is preserved by finite unions, it follows that $\partial A_{<r}$ is $(d-1)$-rectifiable.
\end{proof}

Applying now \cite[\S3.2.39]{F69}, we get
\begin{Corollary} \label{Cor1}
For any $r>0$, we have 
$$
{\cal M}^{d-1}(\partial A_{r})={\cal H}^{d-1}(\partial A_{r}) \text{ and } {\cal M}^{d-1}(\partial A_{<r})={\cal H}^{d-1}(\partial A_{<r}).
$$
\end{Corollary}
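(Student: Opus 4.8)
The plan is to deduce the statement directly from Federer's comparison theorem between Minkowski content and Hausdorff measure for rectifiable sets \cite[\S3.2.39]{F69}, whose hypotheses I only need to verify. That theorem asserts that if $W\subseteq\rd$ is a closed set of finite $(d-1)$-dimensional Hausdorff measure which is $(d-1)$-rectifiable, then the $(d-1)$-dimensional Minkowski content of $W$ exists and equals ${\cal H}^{d-1}(W)$. I would apply it twice, with $W=\partial A_{<r}$ and, separately, with $W=\partial A_r$.

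First I would check that both sets are closed: they are boundaries, hence closed, and since $A$ is bounded they are contained in the compact set $\overline{A}_r$ and are therefore compact. Next, Proposition~\ref{T1} supplies exactly the rectifiability hypothesis, namely that for every $r>0$ both $\partial A_{<r}$ and $\partial A_r$ are $(d-1)$-rectifiable. It then remains to check finiteness of the Hausdorff measure, which is automatic from the notion of rectifiability used here: each of the two sets is a Lipschitz image of a bounded subset $B\subseteq\R^{d-1}$, and since $B$ has finite $(d-1)$-dimensional Hausdorff (= Lebesgue) measure and a Lipschitz map increases ${\cal H}^{d-1}$ by at most the factor $(\mathrm{Lip})^{d-1}$, one obtains ${\cal H}^{d-1}(\partial A_{<r})<\infty$ and likewise ${\cal H}^{d-1}(\partial A_r)<\infty$.

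With all three hypotheses --- closedness, $(d-1)$-rectifiability, and finiteness of ${\cal H}^{d-1}$ --- in place, Federer's theorem yields ${\cal M}^{d-1}(\partial A_{<r})={\cal H}^{d-1}(\partial A_{<r})$ and ${\cal M}^{d-1}(\partial A_r)={\cal H}^{d-1}(\partial A_r)$, which is the assertion.

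The only genuine subtlety, and the step I would be most careful about, is matching conventions: \cite[\S3.2.39]{F69} is phrased in terms of upper and lower Minkowski content together with $(d-1)$-densities, so one must confirm that rectifiability plus finite ${\cal H}^{d-1}$ measure guarantees the underlying density condition (the approximate $(d-1)$-density equals $1$ at ${\cal H}^{d-1}$-almost every point), which forces the upper and lower contents to coincide so that the single value ${\cal M}^{d-1}$ appearing in the statement is well defined. This density property is a standard feature of $(d-1)$-rectifiable sets of finite measure, so once it is invoked everything else is a direct substitution into the cited theorem. This is precisely why the corollary follows so quickly: the substantive work lies entirely in the rectifiability assertion of Proposition~\ref{T1}.
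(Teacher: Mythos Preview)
Your proposal is correct and follows exactly the paper's approach: the paper simply writes ``Applying now \cite[\S3.2.39]{F69}, we get'' after Proposition~\ref{T1}, and you have spelled out precisely the hypothesis-checking that this citation entails. If anything, you are slightly more careful than necessary, since Federer's \S3.2.39 applies directly to closed $(d-1)$-rectifiable sets in the Lipschitz-image sense used here, so finiteness of $\mathcal{H}^{d-1}$ and the density considerations are already built into that result.
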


With the result of Stach\'o we get immediately the following strengthening of \cite[Corollary~4.7]{HLW04} and \cite[Lemma~1]{HHL04}:
\begin{Corollary} \label{Cor2}
The function $V_A$ is differentiable at $r>0$ with 
$$V_A'(r)={\cal H}^{d-1}(\partial A_{r})={\cal H}^{d-1}(\partial_+ A_{r})={\cal H}^{d-1}(\partial A_{<r})$$
for all $r>0$ up to a countable set.
\end{Corollary}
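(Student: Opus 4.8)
The plan is to read off all four equalities by combining Stach\'o's differentiability result with Corollary~\ref{Cor1} and the Hug--Last--Weil identity, with no new analysis needed beyond assembling these ingredients on a common set of good radii.

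First I would invoke Stach\'o's statement that the one-sided derivatives $(V_A)'_+(r)$ and $(V_A)'_-(r)$ exist for every $r>0$ and agree except on a countable set $N\subset(0,\infty)$. Fix $r\notin N$; then $V_A$ is differentiable at $r$ and $V_A'(r)=(V_A)'_+(r)=(V_A)'_-(r)$. Feeding this into the averaged formula \eqref{Stach} collapses its right-hand side to a single derivative, giving ${\cal M}^{d-1}(\partial A_{<r})=V_A'(r)$. Corollary~\ref{Cor1} then lets me replace the Minkowski content by the Hausdorff measure, so that ${\cal H}^{d-1}(\partial A_{<r})=V_A'(r)$.

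Next I would use the Hug--Last--Weil identity $(V_A)'_+(r)={\cal H}^{d-1}(\partial_+A_r)$, valid for every $r>0$; at our differentiability point this reads ${\cal H}^{d-1}(\partial_+A_r)=V_A'(r)$. At this stage the two outer quantities in the chain ${\cal H}^{d-1}(\partial_+A_r)\le{\cal H}^{d-1}(\partial A_r)\le{\cal H}^{d-1}(\partial A_{<r})$ have both been shown to equal $V_A'(r)$, so a squeeze forces the middle term ${\cal H}^{d-1}(\partial A_r)$ to equal $V_A'(r)$ as well. This establishes all four asserted equalities for every $r\notin N$.

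I do not anticipate a genuine obstacle here: the substance of the statement lies entirely in the results already in hand, and the only point to watch is the bookkeeping of the exceptional set---one must invoke differentiability, the averaged identity \eqref{Stach}, and the content-to-measure passage of Corollary~\ref{Cor1} on the same countable-complement set of radii, so that the concluding squeeze is legitimate for each fixed $r\notin N$.
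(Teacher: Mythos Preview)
Your proposal is correct and follows exactly the argument the paper has in mind: the corollary is stated there as an immediate consequence of Stach\'o's result together with Corollary~\ref{Cor1} and the Hug--Last--Weil identity, and your write-up simply spells out the squeeze argument that makes this explicit. There is nothing to add.
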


\begin{Corollary} \label{Cor3}
For any $r>0$,
$${\cal H}^{d-1}(\partial A_{r})\leq\lim_{s\to r_-}V_A'(s)$$
(the limit is understood over those $s<r$ where $V_A'$ exists).
\end{Corollary}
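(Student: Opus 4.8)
The plan is to reduce the claim to two separate facts and then chain them: first, the pointwise bound ${\cal H}^{d-1}(\partial A_r)\le (V_A)'_-(r)$, and second, the identification $\lim_{s\to r^-}V_A'(s)=(V_A)'_-(r)$ of the left limit of the derivative with the left derivative. Granting both, the corollary is immediate from
${\cal H}^{d-1}(\partial A_r)\le (V_A)'_-(r)=\lim_{s\to r^-}V_A'(s)$.

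For the first fact I would simply read off the chain of inequalities stated above (before the two examples), now fed with the rectifiability input. Since $\partial A_r\subseteq\partial A_{<r}$ one has ${\cal H}^{d-1}(\partial A_r)\le {\cal H}^{d-1}(\partial A_{<r})$, and Corollary~\ref{Cor1} allows me to replace the Hausdorff measure on the right by the Minkowski content, ${\cal H}^{d-1}(\partial A_{<r})={\cal M}^{d-1}(\partial A_{<r})$. Stach\'o's formula \eqref{Stach} then evaluates the latter as the average $\tfrac12\big((V_A)'_-(r)+(V_A)'_+(r)\big)$, which is at most $(V_A)'_-(r)$ because $(V_A)'_+(r)\le (V_A)'_-(r)$. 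This yields ${\cal H}^{d-1}(\partial A_r)\le (V_A)'_-(r)$ with no further work.

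The second fact is where the genuine content lies. By Corollary~\ref{Cor2}, $V_A'(s)$ exists off a countable set, and at each such $s$ it coincides with both one-sided derivatives, in particular with $(V_A)'_-(s)$. Stach\'o's analysis of the Kneser function $V_A$ shows that the left derivative $(V_A)'_-$ is left-continuous (equivalently $(V_A)'_-(r)=\lim_{s\uparrow r}(V_A)'_+(s)$), so letting $s\uparrow r$ through the co-countable set on which $V_A'$ is defined gives $\lim_{s\to r^-}V_A'(s)=(V_A)'_-(r)$, which is exactly the quantity meant by the limit in the statement. If one prefers to avoid invoking this semicontinuity directly, the same conclusion can be obtained analytically: $V_A$ is locally Lipschitz on $(0,\infty)$, hence absolutely continuous, so $(V_A)'_-(r)=\lim_{s\uparrow r}(r-s)^{-1}\int_s^r V_A'(t)\,dt$, and once the one-sided limit $L:=\lim_{s\to r^-}V_A'(s)$ is known to exist, averaging forces $L=(V_A)'_-(r)$.

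The main obstacle is precisely the existence of the one-sided limit $\lim_{s\to r^-}V_A'(s)$ and its identification with $(V_A)'_-(r)$: the averaging argument by itself only sandwiches $(V_A)'_-(r)$ between the $\liminf$ and the $\limsup$ of $V_A'$ from the left, so some regularity such as left-continuity of the left derivative (a property of parallel-volume functions going back to Kneser and Stach\'o) is genuinely needed to exclude oscillation of the surface area $V_A'(s)={\cal H}^{d-1}(\partial A_{<s})$ as $s\uparrow r$. I would therefore isolate and cite this semicontinuity statement explicitly; everything else is a direct combination of Corollaries~\ref{Cor1} and~\ref{Cor2} with \eqref{Stach}.
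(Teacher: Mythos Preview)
Your proposal is correct and follows essentially the same route as the paper: bound ${\cal H}^{d-1}(\partial A_r)$ by $(V_A)'_-(r)$ via Corollary~\ref{Cor1} and Stach\'o's formula \eqref{Stach}, then invoke the left-continuity of $(V_A)'_-$ from \cite{Stacho} together with the fact that $V_A'=(V_A)'_-$ off a countable set to identify $(V_A)'_-(r)$ with the left limit of $V_A'$. Your recognition that the left-continuity is the genuinely needed input (and that the averaging argument alone cannot replace it) is exactly right.
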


\begin{proof}
Using Corollary~\ref{Cor1} and \eqref{Stach}, we get
$${\cal H}^{d-1}(\partial A_{r})={\cal M}^{d-1}(\partial A_{r})\leq {\cal M}^{d-1}(\partial A_{<r})\leq (V_A)'_-(r).$$
The assertion follows now from the left continuity of $(V_A)'_-$ and from the fact that the left derivative of $V_A$ coincides with the derivative up to a countable set of values, see \cite{Stacho}.
\end{proof}

\section{Asymptotic behaviour}

Given a compact set $A\subseteq\R^d$, we shall use the notation $S_A(r):=\Ha^{d-1}(\partial A_r)$, $r\geq 0$, for the $(d-1)$-dimensional Hausdorff measure of the boundary of the parallel set. We shall discuss in this section the asymptotic behaviour of $S_A(r)$ as $r\to 0$. This is, of course, closely related to the asymptotic behaviour of $V_A(r)$ through Corollary~\ref{Cor2}.

Recall the \emph{$s$-dimensional lower and upper Minkowski content} of a compact set $A\subset\R^d$, which are defined by
\[
\underline{\cal M}^s(A):=\liminf_{r\to 0} \frac{V_A(r)}{\kappa_{d-s}r^{d-s}} \quad \text{ and } \quad 
\overline{\cal M}^s(A):=\limsup_{r\to 0} \frac{V_A(r)}{\kappa_{d-s}r^{d-s}},
\] 
where $\kappa_t:=\pi^{t/2}/\Gamma(1+\frac t2)$. (If $t$ is an integer, then $\kappa_t$ is the volume of a unit $t$-ball).
If $\underline{\cal M}^s(A)=\overline{\cal M}^s(A)$, then the common value ${\cal M}^s(A)$ is the \emph{$s$-dimensional Minkowski content} of $A$. 
We denote by 
\[
\underline{\dim}_M A:=\inf\{t\ge 0 : \underline{\cal M}^s(A)=0\}=\sup\{t\ge 0 :\underline{\cal M}^s(A)=\infty\}
\]
and 
\[
\overline{\dim}_M A=\inf\{t\ge 0 :\overline{\cal M}^s(A)=0\}=\sup\{t\ge 0 :\overline{\cal M}^s(A)=\infty\}
\]
the \emph{lower} and \emph{upper Minkowski dimension} of $A$.

In analogy with the Minkowski content, we define 
for $0\leq s<d$
\[
\underline{\cal S}^s(A):=\liminf_{r\to 0} \frac{S_A(r)}{(d-s)\kappa_{d-s}r^{d-1-s}} 
\quad \text{ and } \quad 
\overline{\cal S}^s(A):=\limsup_{r\to 0} \frac{S_A(r)}{(d-s)\kappa_{d-s}r^{d-1-s}}.
\]  
If both numbers coincide, we 
denote the common value by ${\cal S}^s(A)$ and call it the \emph{surface area based content} or, briefly, the \emph{S-content} of $A$. For convenience, we set ${\cal S}^d(A):=0$. (Note that the above definition would not make sense for $s=d$. However, setting ${\cal S}^d(A)$ zero is justified by the fact that $\lim_{r\to 0}rS_A(r)=0$. Indeed, by Corollary~\ref{Cor1} and \cite[Satz~4]{Kneser}, we have $\Ha^{d-1}(\partial A_r)={\cal M}^{d-1}(\partial  A_r)\le \frac dr (V_A(r)-V_A(0))$ for each $r>0$. Since $(V_A(r)-V_A(0))\to 0$ as $r\to 0$,  we obtain\\ $\lim_{r\to 0} r \Ha^{d-1}(\partial A_r) = 0$, as claimed.) 

We call the numbers 
\[
\underline{\dim}_S A:=\sup\{0\le t\le d: \underline{\cal S}^t(A)=\infty\}=\inf\{0\le t\le d: \underline{\cal S}^t(A)=0\}
\]
and
\[
\overline{\dim}_S A:=\sup\{0\le t\le d: \overline{\cal S}^t(A)=\infty\}=\inf\{0\le t\le d: \overline{\cal S}^t(A)=0\}
\]
the \emph{lower and upper surface area based dimension} or \emph{S-dimension} of $A$, respectively.  Obviously, $\underline{\dim}_S A\le \overline{\dim}_S A$, and if equality holds, the common value will be regarded as the \emph{surface area based dimension} (or \emph{S-dimension}) of $A$ and denoted by $\dim_S A$. 

\noindent{\bf Remark.}
The upper S-dimension $\overline{\dim}_S A$ is closely related to the curvature scaling exponent $s_{d-1}(A)$ defined in \cite{winter}. In fact, it is the natural extension of this concept to arbitrary compact sets (just with a different normalization).  Since $2 C_{d-1}(A_r)= \Ha^{d-1}(\partial A_r)$,  whenever $C_{d-1}(A_r)$ is defined, one has $\overline{\dim}_S A= s_{d-1}-1+d$.  Similarly, ${\cal S}^{s}(A)$, with $s=\dim_S A$, generalizes $C_{d-1}^f(A)$, the \emph{fractal curvature} of order $d-1$. If the latter exists, then these numbers differ by the constant factor $\frac 12 \kappa_{d-s} (d-s)$. 
\vspace{2mm}

\begin{Proposition} \label{prop3}
Let $A\subseteq\rd$ be compact and let $h:[0,\infty)\to[0,\infty)$ be a continuous differentiable function with $h(0)=0$. 
Assume that $h'$ is nonzero on some right neighbourhood of $0$.
Let
\[
 \underline{S}:=\liminf_{r\to 0} \frac{S_A(r)}{h'(r)} \qquad \text{ and }\qquad  \overline{S}:=\limsup_{r\to 0} \frac{S_A(r)}{h'(r)}.
\]
Then 
\begin{equation} \label{ineq1}
\underline{S}\le \liminf_{r\to 0}\frac{V_A(r)-V_A(0)}{h(r)}\le \limsup_{r\to 0}\frac{V_A(r)-V_A(0)}{h(r)}\le \overline{S}.
\end{equation}
In particular, if $\underline{S}=\overline{S}$, i.e.~if the limit $$S:=\lim_{r\to 0}\frac{S_A(r)}{h'(r)}\in[0,\infty]$$
exists then 
$$\lim_{r\to 0}\frac{V_A(r)-V_A(0)}{h(r)}$$
exists as well and equals $S$.
\end{Proposition}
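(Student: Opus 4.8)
The plan is to turn the two-sided estimate \eqref{ineq1} into a pair of one-sided comparison arguments between $V_A$ and the rescaled function $h$, the whole point being that $V_A'=S_A$ holds off a countable set by Corollary~\ref{Cor2}. First I would dispose of the elementary facts about $h$. Since $h\in C^1$, $h(0)=0$ and $h\ge 0$, continuity of $h'$ together with its nonvanishing on a right neighbourhood of $0$ forces $h'>0$ there: if $h'$ were negative on $(0,\delta)$, then $h$ would decrease below $h(0)=0$, contradicting $h\ge0$. Hence $h$ is strictly increasing near $0$ and $h(r)=\int_0^r h'(s)\,ds>0$ for small $r>0$, so the quotients in \eqref{ineq1} make sense. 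I would also record that $V_A$ is continuous on $[0,\infty)$: on $(0,\infty)$ because its one-sided derivatives exist everywhere (Stach\'o), and right-continuous at $0$ because $A_a\downarrow\overline A$ as $a\to0^+$, whence $V_A(a)\to\Ha^d(\overline A)=V_A(0)$.

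For the rightmost inequality I would assume $\overline{S}<\infty$ (otherwise nothing is to be shown) and fix $\ep>0$. By definition of $\overline{S}$ and since $h'>0$, there is $\delta>0$ with $S_A(s)\le(\overline{S}+\ep)\,h'(s)$ for all $s\in(0,\delta)$. Set $F(s):=(\overline{S}+\ep)\,h(s)-V_A(s)$. Then $F$ is continuous on $(0,\delta)$, and by Corollary~\ref{Cor2} there is a countable set $N$ off which $V_A'(s)=S_A(s)$, so $F'(s)=(\overline{S}+\ep)\,h'(s)-S_A(s)\ge0$ for every $s\in(0,\delta)\setminus N$. The increasing function theorem (a continuous function whose derivative is nonnegative outside a countable set is nondecreasing) then gives that $F$ is nondecreasing on $(0,\delta)$, so $F(r)\ge F(a)$ for $0<a<r<\delta$. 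Letting $a\to0^+$ and using $h(a)\to0$, $V_A(a)\to V_A(0)$ yields $(\overline{S}+\ep)\,h(r)\ge V_A(r)-V_A(0)$; dividing by $h(r)>0$, taking $\limsup_{r\to0}$ and then $\ep\to0$ gives the claim.

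The leftmost inequality is symmetric. It is trivial when $\underline{S}=0$, since $V_A(r)-V_A(0)\ge0$ and $h(r)>0$ force the $\liminf$ to be nonnegative. Otherwise fix a constant $c<\underline{S}$ (taken arbitrarily large when $\underline{S}=\infty$) and choose $\delta>0$ with $S_A(s)\ge c\,h'(s)$ on $(0,\delta)$. Now $G(s):=V_A(s)-c\,h(s)$ is continuous with $G'(s)=S_A(s)-c\,h'(s)\ge0$ off $N$, hence nondecreasing on $(0,\delta)$ by the same theorem. Passing to the limit $a\to0^+$ in $G(r)\ge G(a)$ gives $V_A(r)-V_A(0)\ge c\,h(r)$, so $\liminf_{r\to0}\frac{V_A(r)-V_A(0)}{h(r)}\ge c$, and letting $c\uparrow\underline{S}$ completes the bound. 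The middle inequality $\liminf\le\limsup$ is automatic, and the concluding \emph{in particular} statement is exactly the case $\underline{S}=\overline{S}$.

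The design choice that I expect to be the real crux is \emph{avoiding} the tempting route of writing $V_A(r)-V_A(0)=\int_0^r S_A(s)\,ds$ and then running a L'Hôpital-type ratio argument: that identity needs absolute continuity of $V_A$, which does not follow from differentiability off a countable set alone (a monotone function can fail the fundamental theorem of calculus). Comparing $V_A\mp(\text{const})\,h$ through the increasing function theorem sidesteps this entirely, using only continuity of $V_A$ and the countable exceptional set from Corollary~\ref{Cor2}. The one genuinely delicate technical point is then the passage $a\to0^+$: the comparison only holds on a punctured right neighbourhood of $0$, and it is precisely the right-continuity of $V_A$ at $0$ together with $h(0)=0$ that lets me transfer the inequality down to the origin and isolate $V_A(0)$.
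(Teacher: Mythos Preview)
Your proof is correct, but it takes a different route from the paper's. The paper runs precisely the L'H\^opital-type argument you chose to avoid: it invokes the absolute continuity of $V_A$ (citing Stach\'o \cite[Lemma~2]{Stacho}), sets $\Phi(t)=(V_A(r)-V_A(0))h(t)-h(r)V_A(t)$, and uses $\int_0^r\Phi'(t)\,dt=\Phi(r)-\Phi(0)=0$ to produce points $t_1,t_2\in(0,r)$ with $\Phi'(t_1)\ge 0\ge\Phi'(t_2)$, which is exactly a Cauchy mean value inequality
\[
\frac{V_A'(t_1)}{h'(t_1)}\le\frac{V_A(r)-V_A(0)}{h(r)}\le\frac{V_A'(t_2)}{h'(t_2)}.
\]
So your stated reason for avoiding this route---that absolute continuity of $V_A$ is unavailable---is mistaken in this setting; it is a known property of the parallel volume function (indeed of any Kneser function), and the paper uses it freely. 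That said, your monotonicity comparison via the increasing-function theorem (continuous, derivative $\ge 0$ off a countable set $\Rightarrow$ nondecreasing) is a perfectly valid alternative: it trades the integral identity $V_A(r)-V_A(0)=\int_0^r S_A$ for a purely differential comparison, and thereby uses strictly less about $V_A$ than the paper does. The paper's approach in return yields the sharper pointwise sandwich above for every $r$, not just the asymptotic inequalities, and fits naturally with the later remark that the result extends verbatim to general Kneser functions.
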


\begin{proof}
We follow the lines of the classical proof of l'Hospital's rule, using the absolute continuity of $V_A$ (see e.g.\ \cite[Lemma~2]{Stacho}).

We shall use the following fact:
{\sl For any $r>0$ there exist $0<t_1,t_2<r$ such that} 
\begin{equation} \label{E22}
\frac{(V_A)'(t_1)}{h'(t_1)}\leq\frac{V_A(r)-V_A(0)}{h(r)}\leq \frac{(V_A)'(t_2)}{h'(t_2)}.
\end{equation}
To see this, fix an $r>0$. Since the function
$$\Phi(t):=(V_A(r)-V_A(0))h(t)-h(r)V_A(t),\quad 0\leq t\leq r,$$
is absolutely continuous and $\Phi(0)=\Phi(r)$,
we have $\int_0^r\Phi'(t)\, dt=0$. Hence, either $\Phi'(t)=0$ for almost all $t\in (0,r)$, or there exist $t_1,t_2\in (0,r)$ such that $\Phi'(t_1)>0>\Phi'(t_2)$. This proves \eqref{E22}.

Note that $V_A'(t)=S_A(t)$ whenever $V_A'(t)$ exists. Thus, taking the $\limsup$ as $r\to 0$ in the right inequality in \eqref{E22}, we get
\[
\limsup_{r\to 0} \frac{V_A(r)-V_A(0)}{h(r)}\le \limsup_{r\to 0} \frac{S_A(t_2(r))}{h'(t_2(r))}\le \limsup_{r\to 0} \frac{S_A(r)}{h'(r)}=\overline{S},
\] 
which is the right inequality in \eqref{ineq1}. Analogously, the left inequality is obtained by taking the $\liminf$ as $r\to 0$ in left inequality in \eqref{E22}. 

If $\underline{S}=\overline{S}$, then the existence of the limit $\lim_{r\to 0}\frac{V_A(r)-V_A(0)}{h(r)}$ follows immediately from \eqref{ineq1}. 
\end{proof}
Proposition~\ref{prop3} yields the following general relations between Minkowski content and S-content.
\begin{Corollary} \label{cor4}
Let $A\subset\R^d$ be compact and assume that $V_A(0)=0$. Then, for all $s\leq d$, 
$$\underline{\cal S}^s(A)\le \underline{\cal M}^s(A)\le \overline{\cal M}^s(A)\le \overline{\cal S}^s(A).$$
\end{Corollary}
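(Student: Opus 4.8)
The plan is to derive the two outer inequalities directly from Proposition~\ref{prop3} by a single well-chosen test function, the middle inequality $\underline{\cal M}^s(A)\le\overline{\cal M}^s(A)$ being the trivial $\liminf\le\limsup$. For $s<d$ I would apply Proposition~\ref{prop3} with
\[
h(r):=\kappa_{d-s}\,r^{d-s}.
\]
Since $d-s>0$, this gives $h(0)=0$, and $h$ is continuous on $[0,\infty)$ and differentiable on $(0,\infty)$ with $h'(r)=(d-s)\kappa_{d-s}r^{d-1-s}>0$, so $h'$ is nonzero on a right neighbourhood of $0$. The point of this choice is that the two ratios appearing in Proposition~\ref{prop3} become exactly the quantities in the corollary:
\[
\frac{S_A(r)}{h'(r)}=\frac{S_A(r)}{(d-s)\kappa_{d-s}r^{d-1-s}},
\]
so that $\underline{S}=\underline{\cal S}^s(A)$ and $\overline{S}=\overline{\cal S}^s(A)$, while, using the hypothesis $V_A(0)=0$,
\[
\frac{V_A(r)-V_A(0)}{h(r)}=\frac{V_A(r)}{\kappa_{d-s}r^{d-s}},
\]
whose lower and upper limits as $r\to 0$ are $\underline{\cal M}^s(A)$ and $\overline{\cal M}^s(A)$. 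With these identifications, \eqref{ineq1} reads verbatim $\underline{\cal S}^s(A)\le\underline{\cal M}^s(A)\le\overline{\cal M}^s(A)\le\overline{\cal S}^s(A)$.

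The boundary case $s=d$ I would dispose of separately. Here $\kappa_{d-s}r^{d-s}=\kappa_0=1$, so $\overline{\cal M}^d(A)=\limsup_{r\to 0}V_A(r)=V_A(0)=0$ by continuity of $V_A$ together with the assumption $V_A(0)=0$, and likewise $\underline{\cal M}^d(A)=0$. Since ${\cal S}^d(A)=0$ by the convention fixed above, all four quantities vanish and the asserted chain holds trivially.

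The one place requiring care is the regime $d-1<s<d$, which is precisely the range relevant to fractal sets: there $h'(r)=(d-s)\kappa_{d-s}r^{d-1-s}$ blows up as $r\to 0$, so $h$ is not of class $C^1$ at the origin and does not literally meet the smoothness hypothesis of Proposition~\ref{prop3}. The main obstacle is thus to confirm that Proposition~\ref{prop3} still applies in this case. I would resolve it by inspecting its proof and noting that the only properties of $h$ actually invoked there are (i) absolute continuity of $h$ on each interval $[0,r]$, which holds because $\int_0^r t^{d-1-s}\,dt<\infty$ whenever $d-s>0$, so that $\Phi$ remains absolutely continuous and $\int_0^r\Phi'=0$; and (ii) the positivity $h'>0$ on $(0,r)$ and $h(r)>0$, which is what permits dividing through in the inequalities obtained from $\Phi'(t_1)>0>\Phi'(t_2)$. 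Both hold for $h(r)=\kappa_{d-s}r^{d-s}$ throughout $0<s<d$, so the conclusion of Proposition~\ref{prop3} is unaffected; equivalently, one reads the differentiability hypothesis there as differentiability on $(0,\infty)$ with $h$ merely continuous at $0$, a condition this $h$ satisfies.
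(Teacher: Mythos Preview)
Your proof is correct and follows exactly the paper's approach: apply Proposition~\ref{prop3} with $h(r)=\kappa_{d-s}r^{d-s}$ for $s<d$, and handle $s=d$ separately using the convention ${\cal S}^d(A)=0$ together with continuity of $V_A$. Your additional paragraph addressing the regularity of $h$ when $d-1<s<d$ is a point the paper's proof passes over silently; your resolution---that the proof of Proposition~\ref{prop3} only uses absolute continuity of $h$ on $[0,r]$, $h(0)=0$, and $h'>0$ on $(0,r)$---is accurate and makes the argument more careful than the original.
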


\begin{proof}
In the case $s=d$, we have $\underline{\cal S}^d(A)=\overline{\cal S}^d(A)=0$ by definition, and it follows from the assumption and the continuity of the volume function that $\underline{\cal M}^d(A)= \overline{\cal M}^d(A)=0$ as well.

Fix now $s<d$ and let $h(t)=\kappa_{d-s}t^{d-s}$. Applying Proposition~\ref{prop3}, we get
\[ \underline{\cal S}^s(A)=\liminf_{r\to 0} \frac{S_A(r)}{h'(r)}\le \liminf_{r\to 0}\frac{V_A(r)}{h(r)}=\underline{\cal M}^s(A).
\] 
The relation $\overline{\cal M}^s(A)\le \overline{\cal S}^s(A)$ is obtained analogously by applying the third inequality from \eqref{ineq1}.
\end{proof}

It is obvious, that the middle inequality in Corollary~\ref{cor4} can be strict. There are many sets for which the Minkowski content does not exist.
However, it was not immediately clear, whether the left and right inequalities can be strict or whether, in fact, equality holds in general.
The following example illustrates that all three inequalities in Corollary~\ref{cor4} can be strict. 

\begin{example}\rm
The Sierpinski gasket $F$ is the self-similar set in $\R^2$ generated by the three similarities $\Phi_1(x)=\frac{1}{2} x$,  $\Phi_2(x)=\frac{1}{2} x+ (\frac 12,0)$ and $\Phi_3(x)=\frac{1}{2} x+(\frac 14,\frac{\sqrt{3}}{4})$. It is well known that its Minkowski dimension is $D:=\dim_M F =\frac{\ln 3}{\ln 2}$. We compute its upper and lower ($D$-dim.) Minkowski contents and S-contents directly. It turns out that all four values are different, providing an example where all inequalities in Corollary~\ref{cor4} are strict.  

Observe that the diameter of $F$ is one and that the inradius $u$ of the middle cut out triangle is $u:=\frac 1{4\sqrt{3}}$. It is not difficult to see that for $n\in\N$ and $r\in I_n:=[2^{-n}u, 2^{n-1}u)$, the area and boundary length of $F_r$ are given by
\[
V(r)=\left(\pi - \frac 32\sqrt{3}(3^n-1)\right) r^2 +3\left(\frac32\right)^n r +\sqrt{3}\left(\frac32\right)^n 2^{-n-2}
\]
and
\[
S(r)=\left(2\pi - 3\sqrt{3} (3^n-1) \right) r + 3\left(\frac32\right)^n. 
\]
Let $t_n(\alpha):=\alpha 2^{-n}u$, $\alpha\in[1,2)$, be a parametrization of $I_n$. We have $t_n(\alpha)^{1-D}=\alpha^{1-D}u^{1-D}\left(\frac32\right)^n$ and thus
\[
\frac{S(t_n(\alpha))}{t_n(\alpha)^{1-D}}=\alpha^D u^D\left(\frac{2\pi+3\sqrt{3}}{3^n}-3\sqrt{3}\right)+\alpha^{D-1} 3u^{D-1}=\alpha^D c_n + \alpha^{D-1} b,
\]  
where $b:=3u^{D-1}$ and $c_n:=u^D \left(3^{-n}(\pi +3\sqrt{3})-3 \sqrt{3}\right)$. Clearly, if we choose the sequence $(\alpha_n)$ in $[1,2]$ such that $\alpha_n$ is the value where the function $\alpha^D c_n + \alpha^{D-1} b$ attains its maximum in $[1,2]$, then
\[
\lim_{n\to\infty} \alpha_n^D c_n + \alpha_n^{D-1} b=\lim_{n\to\infty} \frac{S(t_n(\alpha_n))}{t_n(\alpha_n)^{1-D}}=(2-D)\kappa_{2-D}\overline{\cal S}^D(F).
\]
Moreover, since $c_n\to c:=-3\sqrt{3}u^D$ as $n\to\infty$ and since the function $g:\R^2\to\R, (x,y)\mapsto x^D y+x^{D-1} b$ is continuous, we have $\lim_{n\to\infty} g(\alpha_n,c_n)=g(\alpha_{\max},c)$, where $\alpha_{\max}=\lim_{n\to\infty}\alpha_n$ is the (unique) value where the maximum of the function $g(x,c)$ in $[1,2]$ is attained. A simple calculation shows that $\alpha_{\max}=4(1-\frac 1D)$. Hence,  
\begin{eqnarray*}
\kappa_{2-D}\overline{\cal S}^D(F)&=&\frac{g(\alpha_{\max},c)}{2-D} =\frac{\alpha_{\max}^D c +\alpha_{\max}^{D-1} b}{2-D}\\
&=& \frac{3\sqrt{3}^{1-D} }{(2-D)(D-1)}\left(1-\frac 1D\right)^D\approx 1.846.
\end{eqnarray*}
Choosing the sequence $(\alpha_n)$ such that the minima are attained, a similar argument shows that $\alpha_{\min}=1$ and hence
\begin{eqnarray*}
\kappa_{2-D}\underline{\cal S}^D(F)&=&\frac{g(\alpha_{\min},c)}{2-D} =\frac{c + b}{2-D}=\frac{\sqrt{3}^{1-D}}{2-D}\approx 1.747.
\end{eqnarray*} 
For upper and lower Minkowski content we can argue in the same way. We have
\[
\frac{V(t_n(\alpha))}{t_n(\alpha)^{2-D}}=\alpha^{D} \frac 12 c_n +\alpha^{D-1} b+\alpha^{D-2} b, 
\]
with $b$ and $c_n$ as above.  Now we consider the function $h:\R^2\to\R, (x,y)\mapsto x^D y\frac 12 +x^{D-1} b+x^{D-2}b$. Choosing $\alpha_n$ such that the maximum of $h(x,c_n)$ in $[1,2]$ is attained, we have
\[
\kappa_{2-D}\overline{\cal M}^D(F)=\lim_{n\to\infty} \frac{V(t_n(\alpha_n))}{t_n(\alpha_n)^{2-D}}=\lim_{n\to\infty} h(\alpha_n,c_n)=h(\alpha_{\max},c),
\]
where $c=\lim_n c_n=- 3 \sqrt{3}u^D$ and $\alpha_{\max}=\lim_n \alpha_n$ 
is the value, where $h(x,c)$ attains its maximum in $[1,2]$ (similarly for the minima and the lower Minkowski content).
It turns out that $\alpha_{\max/\min}= \frac{4}{D}\left(D-1\pm\sqrt{\frac 32 D^2-3D+1}\right)
$
and thus 
we obtain
\begin{eqnarray*}
\kappa_{2-D}\overline{\cal M}^D(F)&=& h(\alpha_{\max},c) =\ldots\approx 1.814\\
\kappa_{2-D}\underline{\cal M}^D(F)&=& h(\alpha_{\min},c) =\ldots \approx 1.811.
\end{eqnarray*} 
This shows that for the Sierpinski gasket $F$ we have the strict inequalities
$$ 
\underline{\cal S}^D(F)<\underline{\cal M}^D(F)<\overline{\cal M}^D(F)<\overline{\cal S}^D(F).
$$
\end{example}
The relations between Minkowski content and S-content in Corollary~\ref{cor4} are obtained only in the case when $V_A(0)=0$. Indeed, it is easy to see 
that e.g.\ for the unit square we have Minkowski dimension equal to 2, 
while the S-dimension equals 1. This could be repaired by replacing the volume $V_A(r)$ with the difference $V_A(r)-V_A(0)$ in the definitions of the Minkowski content and dimension; we shall, however, not follow this way here. Note that in both our applications, self-similar fractal sets and Brownian path, the sets considered have zero volume. 

For the dimensions, it is clear from the definitions that, in general, one has $\underline{\dim}_S A\le \overline{\dim}_S A\le d$ and $\underline{\dim}_M A\le\overline{\dim}_M A\le d$. From Corollary~\ref{cor4} we get

\begin{Corollary} \label{cor5}
For $A\subset\R^d$ compact, we have
\begin{enumerate}
\item[(i)] $\underline{\dim}_S A\le\underline{\dim}_M A$\,, 
\item[(ii)] $\overline{\dim}_M A\le\overline{\dim}_S A$, provided $V_A(0)=0$\,.
\end{enumerate}
\end{Corollary}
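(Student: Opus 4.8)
The plan is to obtain both dimension inequalities from the already established inequalities between the contents, reading off each dimension as the threshold exponent $s$ at which the relevant content jumps between $\infty$ and $0$. The one point needing attention is the choice of which of the two equivalent characterisations of the Minkowski and $S$-dimensions to feed into the comparison, so that the resulting set inclusion and the monotonicity of $\inf$ (respectively $\sup$) point in the right direction. A secondary point, explained at the end, is that (i) holds without the hypothesis $V_A(0)=0$ while (ii) genuinely needs it.

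For (i) I would first dispose of the case $V_A(0)>0$: then for every $s<d$ one has $V_A(r)\to V_A(0)>0$ while the normalising factor $\kappa_{d-s}r^{d-s}\to 0$, so $\underline{\cal M}^s(A)=\infty$ and hence $\underline{\dim}_M A=d\ge\underline{\dim}_S A$ trivially. In the remaining case $V_A(0)=0$, Corollary~\ref{cor4} gives $\underline{\cal S}^s(A)\le\underline{\cal M}^s(A)$ for all $s\le d$. Thus $\underline{\cal M}^s(A)=0$ implies $\underline{\cal S}^s(A)=0$, i.e.\ $\{s:\underline{\cal M}^s(A)=0\}\subseteq\{s:\underline{\cal S}^s(A)=0\}$; since $\underline{\dim}_M A=\inf\{s:\underline{\cal M}^s(A)=0\}$ and $\underline{\dim}_S A=\inf\{s:\underline{\cal S}^s(A)=0\}$, and the infimum of a larger set is smaller, this yields $\underline{\dim}_S A\le\underline{\dim}_M A$.

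For (ii), with the hypothesis $V_A(0)=0$ in force, Corollary~\ref{cor4} also supplies $\overline{\cal M}^s(A)\le\overline{\cal S}^s(A)$ for all $s\le d$. Here I would use the dual characterisations $\overline{\dim}_M A=\sup\{s:\overline{\cal M}^s(A)=\infty\}$ and $\overline{\dim}_S A=\sup\{s:\overline{\cal S}^s(A)=\infty\}$: the content inequality shows that $\overline{\cal M}^s(A)=\infty$ forces $\overline{\cal S}^s(A)=\infty$, so $\{s:\overline{\cal M}^s(A)=\infty\}\subseteq\{s:\overline{\cal S}^s(A)=\infty\}$, and taking suprema gives $\overline{\dim}_M A\le\overline{\dim}_S A$. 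The whole argument is bookkeeping once Corollary~\ref{cor4} is available; the only genuine subtlety, beyond keeping directions straight, is that Proposition~\ref{prop3} compares $S_A$ with the increment $V_A(r)-V_A(0)$ rather than with $V_A(r)$ itself. Since $V_A(r)-V_A(0)\le V_A(r)$, the upper estimate $\underline{\cal S}^s\le\underline{\cal M}^s$ behind (i) survives even when $V_A(0)>0$, whereas the reverse estimate $\overline{\cal M}^s\le\overline{\cal S}^s$ underlying (ii) requires the increment to coincide with $V_A(r)$, i.e.\ $V_A(0)=0$.
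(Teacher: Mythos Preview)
Your proof is correct and follows essentially the same route as the paper: both arguments reduce the dimension inequalities to the content inequalities of Corollary~\ref{cor4} via the threshold characterisations of the dimensions. The only cosmetic differences are that the paper splits part~(i) on whether $\underline{\dim}_M A=d$ rather than on whether $V_A(0)>0$, and for (ii) uses the $\inf$-characterisation instead of the $\sup$-characterisation; your closing remark that $\underline{\cal S}^s(A)\le\underline{\cal M}^s(A)$ already follows from Proposition~\ref{prop3} without assuming $V_A(0)=0$ (since $V_A(r)-V_A(0)\le V_A(r)$) is a nice observation not made explicit in the paper.
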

\begin{proof}
If $\underline{\dim}_MA=d$, assertion (i) is obvious. 
So assume $\underline{\dim}_MA<d$ (which implies $V_A(0)=0$). For each $\underline{\dim}_M A<s\le d$, we have, by Corollary~\ref{cor4}, $\underline{\cal S}^s(A)\le\underline{\cal M}^s(A)=0$ and hence  $\underline{\dim}_S A=\inf\{t: \underline{\cal S}^t(A)=0\}\le s$. Since this holds for $s$ arbitrary close to $\underline{\dim}_M A$, we get $\underline{\dim}_S A\le\underline{\dim}_M A$ as claimed.\\ 
If $V_A(0)=0$, assertion (ii) follows by a similar argument as (i). 
\end{proof}
In the following, we shall show that even $\overline{\dim}_M A=\overline{\dim}_S A$ holds whenever $V_A(0)=0$.

\begin{Lemma} \label{L-34}
If $0\leq s<d$ then
$$\limsup_{r\to 0}\frac{V_A(r)}{r^{d-s}}\geq \frac{d-s}d \limsup_{r\to 0}\frac{S_A(r)}{(d-s)r^{d-s-1}}.$$
\end{Lemma}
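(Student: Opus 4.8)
The plan is to obtain the Lemma from a single pointwise estimate that is already recorded in this section, namely Kneser's bound, and then simply pass to the upper limit. First I would simplify the right-hand side: the factor $(d-s)$ in front cancels the $(d-s)$ in the denominator, so that the asserted inequality is equivalent to
\[
\limsup_{r\to 0}\frac{V_A(r)}{r^{d-s}}\geq \frac1d\,\limsup_{r\to 0}\frac{S_A(r)}{r^{d-s-1}}.
\]
This reformulation exposes the whole structure: all I need is a pointwise bound of the shape $S_A(r)\le (d/r)\,V_A(r)$, after which monotonicity and positive homogeneity of $\limsup$ finish the job.

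The key input is the Kneser estimate recalled earlier: by Corollary~\ref{Cor1} and \cite[Satz~4]{Kneser} one has, for every $r>0$,
\[
S_A(r)=\Ha^{d-1}(\partial A_r)={\cal M}^{d-1}(\partial A_r)\le \frac dr\bigl(V_A(r)-V_A(0)\bigr).
\]
Since $V_A(0)=\Ha^d(A)\ge 0$ for our compact $A$, I would drop the subtracted term to reach the cleaner bound $S_A(r)\le (d/r)\,V_A(r)$, valid for all $r>0$. No hypothesis $V_A(0)=0$ is required: if $V_A(0)>0$ then the left-hand side of the Lemma is already $+\infty$ for $s<d$ and the claim is trivial, but the argument below treats all cases uniformly.

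Dividing this bound by $r^{d-s-1}>0$ gives, for every $r>0$,
\[
\frac{S_A(r)}{r^{d-s-1}}\le d\,\frac{V_A(r)}{r^{d-s}}.
\]
Taking $\limsup_{r\to 0}$ on both sides, using that $\limsup$ is monotone under pointwise inequalities and that $\limsup(c\,g)=c\,\limsup g$ for the positive constant $c=d$, I would obtain
\[
\limsup_{r\to 0}\frac{S_A(r)}{r^{d-s-1}}\le d\,\limsup_{r\to 0}\frac{V_A(r)}{r^{d-s}},
\]
and dividing by $d$ recovers the reformulated inequality, hence the Lemma.

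I do not expect any serious obstacle here, as the entire content is the reuse of Kneser's pointwise estimate. The only points that demand a little care are the harmless discarding of the nonnegative term $V_A(0)$ (so that the statement matches the bare $V_A(r)$ in the Lemma rather than $V_A(r)-V_A(0)$) and verifying the cancellation of the normalizing constants in the first display; both are routine.
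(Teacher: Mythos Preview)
Your argument is correct and considerably shorter than the paper's. The paper does not exploit the packaged Kneser bound $S_A(r)\le \frac dr(V_A(r)-V_A(0))$ that it quoted a few paragraphs earlier; instead it chooses a sequence $(r_i)$ of differentiability points along which $V_A'(r_i)/((d-s)r_i^{d-s-1})$ converges to the right-hand $\limsup$, and then bounds $V_A(r_i)$ from below by telescoping $\int_{r_{j+1}}^{r_j}V_A'(r)\,dr$ and applying Stach\'o's monotonicity of $r^{1-d}V_A'(r)$ on each subinterval. Both routes ultimately rest on the same Stach\'o monotonicity (your Kneser inequality is precisely the integrated form $(V_A)'_-(r)\le \frac dr(V_A(r)-V_A(0))$ of that monotonicity), so the mathematical content is the same; you simply use it once, pointwise, rather than piecewise along a sequence. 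What your approach buys is brevity and transparency; what the paper's approach buys is a template that is reused later (the proof of \eqref{p1} in Section~5 follows the same telescoping pattern with $|\log r|$ in place of $r^{d-s}$, where no ready-made pointwise inequality is available).
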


\begin{proof}
Using Corollary~\ref{Cor3}, we find that there exists
a sequence $(r_i)$ of differentiability points of $V_A$ decreasing monotonely to $0$ and such that  $$\lim_{i\to\infty}\frac{V'_A(r_i)}{(d-s)r_i^{d-s-1}}=
\limsup_{r\to 0}\frac{S_A(r)}{(d-s)r^{d-s-1}}=:a\in [0,\infty].$$
For all $r_{i+1}\leq r\leq r_i$ such that $V_A'(r)$ exists (which is the case for $\Ha^1$-a.a.\ $r$), we have
$$\frac{V'_A(r_i)}{r_i^{d-1}}\leq \frac{V'_A(r)}{r^{d-1}}\leq \frac{V'_A(r_{i+1})}{r_{i+1}^{d-1}},$$
see \cite[Theorem~1]{Stacho}. Hence,
\begin{eqnarray*}
V_A(r_i)&=&\int_0^{r_i}V'_A(r)\, dr =\sum_{j=i}^\infty \int_{r_{j+1}}^{r_j} V'_A(r)\, dr\\
&\geq& \sum_{j=i}^\infty \int_{r_{j+1}}^{r_j} V'_A(r_j) \frac{r^{d-1}}{r_j^{d-1}}\, dr\\
&=& \sum_{j=i}^\infty V'_A(r_j) \frac{r_j^d-r_{j+1}^d}{dr_j^{d-1}}\\
&=& \sum_{j=i}^\infty \frac{V'_A(r_j)}{(d-s)r_j^{d-s-1}} \frac{d-s}d \frac{r_j^d-r_{j+1}^d}{r_j^s}\\
&\geq& \sum_{j=i}^\infty \frac{V'_A(r_j)}{(d-s)r_j^{d-s-1}} \frac{d-s}d (r_j^{d-s}-r_{j+1}^{d-s}).
\end{eqnarray*}
If $a'<a$ then $\frac{V'_A(r_j)}{(d-s)r_j^{d-s-1}}\geq a'$ for all sufficiently large $j$. Thus, for $i$ large enough, we have
$$\frac{V_A(r_i)}{r_i^{d-s}}\geq \frac{a'\frac{d-s}d \sum_{j=i}^\infty(r_j^{d-s}-r_{j+1}^{d-s})}
{\sum_{j=i}^\infty (r_j^{d-s}-r_{j+1}^{d-s})} =a'\frac{d-s}d,$$
which completes the proof.
\end{proof}

\begin{Corollary} \label{cor:equpper}
Let  $A\subset\rd$ be a compact set. Then, for any $0\leq s\leq d$, 
\[
\overline{\cal M}^s(A)\geq \frac{d-s}d \overline{\cal S}^s(A).
\] 
Consequently, $\overline{\dim}_M A=\overline{\dim}_S A$ whenever $V_A(0)=0$.
\end{Corollary}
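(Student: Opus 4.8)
The plan is to read the content inequality directly off Lemma~\ref{L-34} after normalisation, and then combine it with Corollary~\ref{cor5}(ii) to obtain the dimension identity.

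First I would dispose of the boundary case $s=d$: here $\overline{\cal S}^d(A)=0$ by convention, so the asserted inequality reduces to $\overline{\cal M}^d(A)\ge 0$, which is trivially true. For $0\le s<d$ I would simply rewrite Lemma~\ref{L-34} in terms of the normalised contents. Since
$$\overline{\cal M}^s(A)=\frac1{\kappa_{d-s}}\limsup_{r\to 0}\frac{V_A(r)}{r^{d-s}}\quad\text{and}\quad \overline{\cal S}^s(A)=\frac1{\kappa_{d-s}}\limsup_{r\to 0}\frac{S_A(r)}{(d-s)r^{d-1-s}},$$
dividing both sides of the inequality in Lemma~\ref{L-34} by the positive constant $\kappa_{d-s}$ yields exactly $\overline{\cal M}^s(A)\ge\frac{d-s}{d}\,\overline{\cal S}^s(A)$. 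This step is pure bookkeeping; all the analytic work sits in Lemma~\ref{L-34}, whose proof rests on Stach\'o's monotonicity of $V_A'(r)/r^{d-1}$ together with a telescoping comparison estimate.

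For the dimension identity I would prove the two inequalities separately. The inequality $\overline{\dim}_M A\le\overline{\dim}_S A$, valid under the hypothesis $V_A(0)=0$, is already furnished by Corollary~\ref{cor5}(ii). For the reverse inequality $\overline{\dim}_S A\le\overline{\dim}_M A$ I would argue from the content inequality just established. If $\overline{\dim}_M A=d$, then $\overline{\dim}_S A\le d=\overline{\dim}_M A$ holds automatically, since $\overline{\dim}_S A\le d$ by definition. Otherwise, fix any $s$ with $\overline{\dim}_M A<s<d$; by the definition of the upper Minkowski dimension we then have $\overline{\cal M}^s(A)=0$, and since $\tfrac{d-s}{d}>0$ the content inequality forces $\overline{\cal S}^s(A)=0$, whence $\overline{\dim}_S A\le s$. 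Letting $s$ decrease to $\overline{\dim}_M A$ gives $\overline{\dim}_S A\le\overline{\dim}_M A$, and combining the two inequalities completes the identity.

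I do not anticipate a serious obstacle, as the corollary is essentially a repackaging of Lemma~\ref{L-34} together with Corollary~\ref{cor5}. The only points demanding care are the two boundary phenomena at $s=d$: the convention $\overline{\cal S}^d(A)=0$ in the content inequality, and the restriction $s<d$ needed to keep the factor $\tfrac{d-s}{d}$ strictly positive in the implication $\overline{\cal M}^s(A)=0\Rightarrow\overline{\cal S}^s(A)=0$ --- fortunately this is exactly the range $(\overline{\dim}_M A,\,d)$ relevant to the dimension argument. It is also worth recording that the reverse dimension inequality $\overline{\dim}_S A\le\overline{\dim}_M A$ requires no hypothesis on $V_A(0)$, since the content inequality is unconditional; the assumption $V_A(0)=0$ enters only through Corollary~\ref{cor5}(ii), consistent with the unit-square example showing it cannot be dropped from the opposite inequality.
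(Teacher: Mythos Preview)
Your proposal is correct and matches the paper's intended argument: the paper states the corollary without proof immediately after Lemma~\ref{L-34}, implicitly leaving the normalisation by $\kappa_{d-s}$ and the dimension argument (via Corollary~\ref{cor5}(ii)) to the reader, which is precisely what you have written out. Your handling of the boundary cases $s=d$ and $\overline{\dim}_M A=d$ is also correct and appropriate.
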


Curiously, the analogous method fails when trying to show that $\underline{\dim}_M A=\underline{\dim}_S A$. 
A weaker reversed inequality can be derived from the isoperimetric inequality. 

\begin{Proposition} \label{isoperi} Let $A\subset\rd$ be a compact set. Then, for $0\le s\le d$,
$$
\underline{\cal S}^{s\frac{d-1}{d}}(A)\ge c \left(\underline{\cal M}^{s}(A)\right)^{\frac {d-1}d}\,, 
$$ 
where $c$ is a constant depending only on $s$ and $d$.
Consequently, $\underline{\dim}_S A\ge \frac {d-1}d \underline{\dim}_M A$. 
\end{Proposition}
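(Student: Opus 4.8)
The plan is to derive the surface-area lower bound from the classical isoperimetric inequality applied to the parallel sets $A_r$, then translate this into a relation between the S-content and the Minkowski content, and finally read off the dimension inequality.

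The starting point is the isoperimetric inequality in $\rd$: for a bounded set $Z$ of finite perimeter,
\[
\Ha^{d-1}(\partial Z) \ge d\,\kappa_d^{1/d}\,\Ha^d(Z)^{\frac{d-1}{d}}.
\]
I would apply this with $Z=A_r$. By Proposition~\ref{T1} the boundary $\partial A_r$ is $(d-1)$-rectifiable, so $A_r$ has finite perimeter equal to $\Ha^{d-1}(\partial A_r)=S_A(r)$ (at least for all but countably many $r$, by Corollary~\ref{Cor2}, which is enough for a $\liminf$ computation). This yields, for the relevant $r$,
\[
S_A(r) \ge d\,\kappa_d^{1/d}\,V_A(r)^{\frac{d-1}{d}}.
\]
Now I divide both sides by the appropriate power of $r$. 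With the target exponent $t:=s\frac{d-1}{d}$, the normalising denominator in $\underline{\cal S}^t$ is $(d-t)\kappa_{d-t}r^{d-1-t}$, and one checks that $d-1-t=\frac{d-1}{d}(d-s)$, so that $r^{d-1-t}=\bigl(r^{d-s}\bigr)^{\frac{d-1}{d}}$. Therefore
\[
\frac{S_A(r)}{(d-t)\kappa_{d-t}\,r^{d-1-t}}
\ge \frac{d\,\kappa_d^{1/d}}{(d-t)\kappa_{d-t}}\left(\frac{V_A(r)}{r^{d-s}}\right)^{\frac{d-1}{d}}
= \frac{d\,\kappa_d^{1/d}\,\kappa_{d-s}^{\frac{d-1}{d}}}{(d-t)\kappa_{d-t}}\left(\frac{V_A(r)}{\kappa_{d-s}r^{d-s}}\right)^{\frac{d-1}{d}}.
\]
Taking $\liminf$ as $r\to 0$ on both sides, and using that $x\mapsto x^{\frac{d-1}{d}}$ is continuous and increasing so that $\liminf$ commutes with it, the right-hand side converges to $c\,\bigl(\underline{\cal M}^s(A)\bigr)^{\frac{d-1}{d}}$ with the explicit constant $c=d\,\kappa_d^{1/d}\,\kappa_{d-s}^{\frac{d-1}{d}}/\bigl((d-t)\kappa_{d-t}\bigr)$, depending only on $s$ and $d$. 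This gives the claimed inequality $\underline{\cal S}^{t}(A)\ge c\,(\underline{\cal M}^s(A))^{\frac{d-1}{d}}$.

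For the dimension statement, suppose $s<\underline{\dim}_M A$, so that $\underline{\cal M}^s(A)=\infty$. The inequality then forces $\underline{\cal S}^{s\frac{d-1}{d}}(A)=\infty$, whence $s\frac{d-1}{d}\le \underline{\dim}_S A$. Letting $s$ increase to $\underline{\dim}_M A$ yields $\frac{d-1}{d}\underline{\dim}_M A\le \underline{\dim}_S A$, as required.

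The main obstacle I anticipate is the rigorous justification of $\Ha^{d-1}(\partial A_r)=\mathrm{Per}(A_r)$, the identification of the Hausdorff measure of the topological boundary with the perimeter (the variational/reduced-boundary quantity) appearing in the isoperimetric inequality. In general these can differ, but here the $(d-1)$-rectifiability from Proposition~\ref{T1} together with the fact that $A_r$ is a closed set whose boundary equals its measure-theoretic boundary up to lower-dimensional sets should let the standard isoperimetric inequality for rectifiable boundaries apply; the minor technical points about exceptional $r$ are harmless since we only take a $\liminf$.
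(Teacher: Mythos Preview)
Your proof is correct and follows essentially the same route as the paper: apply the isoperimetric inequality to $A_r$, divide by the appropriate power of $r$, and pass to the $\liminf$. The paper dispatches your ``main obstacle'' by using Federer's isoperimetric inequality in the form $d\kappa_d^{1/d}V_A(r)^{(d-1)/d}\le\underline{\cal M}^{d-1}(\partial A_r)$ and then invoking Corollary~\ref{Cor1} to identify ${\cal M}^{d-1}(\partial A_r)$ with $\Ha^{d-1}(\partial A_r)=S_A(r)$ for \emph{every} $r>0$, so no perimeter-versus-Hausdorff-measure discussion (and no exclusion of countably many $r$) is needed.
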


\begin{proof} 
By the isoperimetric inequality (cf.~Federer~\cite[§3.2.43]{F69}) and Corollary~\ref{Cor1}, we have for each $r>0$
\[
 d \kappa_d^{1/d} V_A(r)^{(d-1)/d} \le \underline{\cal M}^{d-1}(\partial A_r)=\Ha^{d-1}(\partial A_r)=S_A(r).
\]
Fix some $s\le d$ and set $s':=\frac{d-1}{d} s$. Dividing by $(\kappa_{d-s} r^{d-s})^{d-1/d}=\kappa_{d-s}^{(d-1)/d} r^{d-1-s'}$, we get for each $r>0$
\[
\left(\frac{V_A(r)}{\kappa_{d-s} r^{d-s}}\right)^{(d-1)/d}
\le \frac{1}{d \kappa_d^{1/d} \kappa_{d-s}^{(d-1)/d}} \frac{S_A(r)}{r^{d-1-s'}}
= c\,\frac{S_A(r)}{(d-s')\kappa_{d-s'} r^{d-1-s'}},
\]
with
\[ 
c:=\frac{(d-s')\kappa_{d-s'}}{d \kappa_d^{1/d}\kappa_{d-s}^{(d-1)/d}}. 
\]
We can assume $\underline{\cal S}^{s'}(A)<\infty$, since the statement is trivial for $\underline{\cal S}^{s'}(A)=\infty$. Choose a null sequence $(r_n)_{n\in\N}$ such that the limes inferior $\underline{\cal S}^{s'}(A)$ is attained, i.e.~such that
\[
\lim_{n\to\infty} \frac{S_A(r_n)}{(d-s')\kappa_{d-s'} r_n^{d-1-s'}} = \underline{\cal S}^{s'}(A) \in[0,\infty).
\]
Then for each $a>\underline{\cal S}^{s'}(A)$ and $n$ sufficiently large, we have 
\[ 
\frac{S_A(r_n)}{(d-s')\kappa_{d-s'} r_n^{d-1-s'}} \le a
\text{ and thus}
\frac{V_A(r_n)}{\kappa_{d-s} r_n^{d-s}}\le c\, a^{d/(d-1)}.
\]
Letting $n\to \infty$, we obtain 
\[
\left(\underline{\cal M}^s(A)\right)^{(d-1)/d}\le \left(\liminf_{n\to\infty}\frac{V_A(r_n)}{\kappa_{d-s} r_n^{d-s}}\right)^{(d-1)/d}\le c\, a\,,
\]
and since this holds for all $a>\underline{\cal S}^{s'}(A)$, the first inequality follows.\\
The second inequality is an immediate consequence.
\end{proof}

Corollary~\ref{cor4} shows in particular that, for sets of zero volume, the existence of the S-content enforces the existence of the Minkowski content and it also determines its value:
If $\underline{\cal S}^s(A)=\overline{\cal S}^s(A)$ for some $s\leq d$, then also $\underline{\cal M}^s(A)= \overline{\cal M}^s(A)$ and the common value is ${\cal M}^s(A)={\cal S}^s(A)$. In particular, if $0<{\cal S}^s(A)<\infty$ for some $s<d$, then the set $A\subset\R^d$ is \emph{Minkowski measurable}, i.e.\ $0<{\cal M}^s(A)<\infty$. Note that our results do not allow the converse conclusion. The existence of Minkowski content does not seem to imply the existence of the S-content.
\vskip 2mm

\noindent {\bf Remark.}
In fact, and as pointed out by the referee, Proposition~\ref{prop3} and Lemma~\ref{L-34} (and also Lemmas~\ref{L2} and \ref{lem:av-content} in the next section) remain true in the slightly more general and purely analytic setting of Kneser functions. A real continuous function $f$ on $[0,\infty)$ is a {\it Kneser function} if 
$$
f(\lambda b)-f(\lambda a)\leq\lambda^n(f(b)-f(a)),\quad 0<a\leq b,\quad \lambda\geq 1.
$$
The volume function $V_A$ of any bounded set $A\subset\rd$ is a Kneser function, cf.\ \cite{Kneser,Stacho}. All the four above mentioned results can be formulated for Kneser functions instead of $V_A$, since all the properties of $V_A$ used in the proofs are consequences of the Kneser property. As an illustration, we reformulate here Proposition~\ref{prop3}:
\vskip 2mm

{\it Let $f$ be a Kneser function and let $h:[0,\infty)\to[0,\infty)$ be a continuous differentiable function with $h(0)=0$. 
Assume that $h'$ is nonzero on some right neighbourhood of $0$.
Let $\underline{S}:=\liminf_{r\to 0} f'(r)/h'(r)$ and $\overline{S}:=\limsup_{r\to 0} f'(r)/h'(r)$. Then 
$$
\underline{S}\le \liminf_{r\to 0}\frac{f(r)-f(0)}{h(r)}\le \limsup_{r\to 0}\frac{f(r)-f(0)}{h(r)}\le \overline{S}.
$$
In particular, if $\underline{S}=\overline{S}$, i.e.~if the limit $S:=\lim_{r\to 0}f'(r)/h'(r)\in[0,\infty]$ exists then 
$\lim_{r\to 0}(f(r)-f(0))/h(r)$ exists as well and equals $S$.}
\vskip 2mm

\noindent {\bf Remark.} 
Since upper Minkowski and upper S-dimension always coincide, cf.~Corollary~\ref{cor:equpper}, it is a natural question to ask whether the same is true for the lower counterparts, i.e., whether the result obtained in Proposition \ref{isoperi} can be improved. After this paper was submitted, further investigations of the second author revealed that there exist sets whose lower S-dimension is strictly smaller than their lower Minkowski dimension. Even more, the estimate regarding the lower dimensions in Proposition \ref{isoperi} turned out to be optimal. These results will be discussed in a forthcoming paper.



\section{Application to self-similar sets}

We use the above results to study the asymptotic behaviour as $r\to 0$ of the surface area $S_F(r)$ of the parallel sets of self-similar sets $F$ satisfying the open set condition.
In particular, we show the existence of $\cal{S}^D(F)$, provided the set $F$ is non-arithmetic, and the existence of the corresponding average limit $\widetilde{S}^D(F)$ in general. Here $D$ denotes the similarity dimension of $F$.

We start with two auxiliary results which apply to arbitrary compact sets $A\subseteq\R^d$.
Recalling the close relation between $S_A(r)$ and $(V_A)'(r)$, we 
have the following estimate.
\begin{Lemma} \label{L2}
Let $A$ be a compact subset of $\rd$ and $r_0>0$. Then for all $r>r_0$,
\[
(V_A)'_-(r)\le \left(\frac r{r_0}\right)^{d-1}(V_A)'_-(r_0).
\] 
\end{Lemma}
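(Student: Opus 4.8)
The plan is to deduce the inequality directly from the Kneser property of the volume function, which for a bounded set $A\subseteq\rd$ reads
$$V_A(\lambda b)-V_A(\lambda a)\le \lambda^d\bigl(V_A(b)-V_A(a)\bigr),\qquad 0<a\le b,\ \lambda\ge 1,$$
together with Stach\'o's result that the left derivative $(V_A)'_-$ exists at every $r>0$. The asserted estimate is precisely the statement that $r\mapsto r^{1-d}(V_A)'_-(r)$ is non-increasing, which is the infinitesimal form of the Kneser property; the same monotonicity (for two-sided derivatives at points of differentiability) was already invoked in the proof of Lemma~\ref{L-34} via \cite[Theorem~1]{Stacho}. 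Working with the left derivative throughout, the Kneser substitution lets me avoid passing through differentiability points and left-continuity arguments.

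Concretely, I would fix $r>r_0>0$, set $\lambda:=r/r_0\ge 1$, and apply the Kneser inequality with $b:=r_0$ and $a:=r_0-h$ for small $h\in(0,r_0)$. Since $\lambda b=r$ and $\lambda a=r-\lambda h$, this gives
$$V_A(r)-V_A(r-\lambda h)\le \lambda^d\bigl(V_A(r_0)-V_A(r_0-h)\bigr).$$
Substituting $k:=\lambda h$ on the left, so that the increment there is at scale $r$, and rewriting the right-hand side in terms of $h=(r_0/r)k$, division by $k$ turns the prefactor $\lambda^d$ into $\lambda^d\cdot(r_0/r)=\lambda^{d-1}$, yielding
$$\frac{V_A(r)-V_A(r-k)}{k}\le \left(\frac{r}{r_0}\right)^{d-1}\frac{V_A(r_0)-V_A(r_0-h)}{h}.$$

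Finally I would let $k\to 0^+$; then also $h=(r_0/r)k\to 0^+$, and both difference quotients converge to the respective left derivatives by Stach\'o's existence result, giving $(V_A)'_-(r)\le (r/r_0)^{d-1}(V_A)'_-(r_0)$, as required. The only point demanding care is the bookkeeping of the two scales $h$ and $k=\lambda h$ in the rescaling, which is exactly what converts the factor $\lambda^d$ from the Kneser inequality into the desired $\lambda^{d-1}$; once this substitution is set up correctly, the passage to the one-sided limit is immediate, since both one-sided limits are known to exist. I expect no genuine obstacle beyond this rescaling step.
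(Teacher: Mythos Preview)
Your argument is correct. It takes a somewhat different route from the paper's proof. The paper invokes Stach\'o's Theorem~1 (the monotonicity of $s\mapsto s^{1-d}(V_A)'_+(s)$), bounds the increment via $V_A(r)-V_A(r-t)=\int_{r-t}^r(V_A)'_+(s)\,ds\le t\,(r/r_0)^{d-1}(V_A)'_+(r_0)$, lets $t\to 0^+$ to obtain $(V_A)'_-(r)\le(r/r_0)^{d-1}(V_A)'_+(r_0)$, and then uses $(V_A)'_+(r_0)\le(V_A)'_-(r_0)$. You instead apply the Kneser inequality directly to finite increments and pass to the left derivative in one step, working with left derivatives throughout and never invoking the intermediate monotonicity result or the right derivative. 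Your approach is more self-contained (it does not cite Stach\'o's theorem, which is itself proved by essentially the same rescaling trick) and marginally cleaner; the paper's approach has the small byproduct of giving the slightly sharper intermediate estimate with $(V_A)'_+(r_0)$ on the right-hand side.
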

\begin{proof}
Let $r>r_0$. For each $0<t<r-r_0$,
\[
V_A(r)-V_A(r-t)=\int_{r-t}^r (V_A)'_+(s) ds \le t \sup_{r-t\le s\le r}(V_A)'_+(s).
\]
Since, by Stach\'o \cite[Theorem 1]{Stacho}, $s^{1-d} (V_A)'_+(s)$ is decreasing, we infer that
$(V_A)'_+(s)\le (s/r_0)^{d-1}(V_A)'_+(r_0)$
for all $r-t\le s\le r$.
Hence 
\[
\frac{V_A(r)-V_A(r-t)}t
\le \sup_{r-t\le s\le r} \left(\frac s{r_0}\right)^{d-1}(V_A)'_+(r_0)
=  \left(\frac r{r_0}\right)^{d-1}(V_A)'_+(r_0)
\]
and for $t\to 0$ we obtain
\[ (V_A)'_-(r) \le \left(\frac r{r_0}\right)^{d-1}(V_A)'_+(r_0) \le \left(\frac r{r_0}\right)^{d-1}(V_A)'_-(r_0)
\]
as claimed.
\end{proof}
Applying Corollary~\ref{Cor1} we obtain
\begin{Corollary} \label{Cor4}
Let $A$ be a compact subset of $\rd$ and $0<a<b$. Then there is a constant $c>0$ such that for all $r\in[a,b]$
\[
S_A(r)\le c.
\]
\end{Corollary}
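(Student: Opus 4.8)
The plan is to bound $S_A(r)$ uniformly on the compact interval $[a,b]$ by controlling the left derivative $(V_A)'_-$ at a fixed reference point and then using the monotonicity result of Lemma~\ref{L2} together with Corollary~\ref{Cor3} and Corollary~\ref{Cor1}. First I would fix a reference radius $r_0$ slightly below $a$, say $r_0=a/2$, and note that since $V_A$ is differentiable up to a countable set (Corollary~\ref{Cor2}) and its left derivative exists everywhere (Stach\'o), the quantity $(V_A)'_-(r_0)$ is a finite real number; this will serve as the anchor for the whole estimate.

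The key step is then to invoke Lemma~\ref{L2} to obtain, for every $r>r_0$,
\[
(V_A)'_-(r)\le\left(\frac{r}{r_0}\right)^{d-1}(V_A)'_-(r_0).
\]
On the interval $[a,b]$ the factor $(r/r_0)^{d-1}$ is bounded above by $(b/r_0)^{d-1}$, so $(V_A)'_-(r)$ is bounded by a constant independent of $r\in[a,b]$. To pass from this bound on the left derivative to the surface area $S_A(r)=\Ha^{d-1}(\partial A_r)$ itself, I would use Corollary~\ref{Cor3}, which gives
\[
S_A(r)=\Ha^{d-1}(\partial A_r)\le\lim_{s\to r_-}V_A'(s)\le(V_A)'_-(r),
\]
the last inequality holding because the limit over differentiability points from the left is dominated by the left derivative (again by Stach\'o's monotonicity and left-continuity of $(V_A)'_-$). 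Chaining these inequalities yields
\[
S_A(r)\le\left(\frac{b}{r_0}\right)^{d-1}(V_A)'_-(r_0)=:c
\]
for all $r\in[a,b]$, which is exactly the claimed bound.

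I expect the only genuine subtlety to be the careful justification that $S_A(r)\le(V_A)'_-(r)$ for \emph{every} $r$ in the interval, not merely for the differentiability points. This is precisely what Corollary~\ref{Cor3} supplies, so the main obstacle is really just assembling the earlier results in the right order rather than any new estimate; the monotonicity of $s^{1-d}(V_A)'_+(s)$ from Stach\'o, which underlies Lemma~\ref{L2}, does all the heavy lifting. One should also remark that the constant $c$ depends on $A$, $a$, $b$ and $d$ through the finite value $(V_A)'_-(r_0)$, but is independent of the particular $r\in[a,b]$, which is all that the statement requires.
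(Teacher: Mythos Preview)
Your argument is correct and follows essentially the same route as the paper: the paper bounds $S_A(r)\le (V_A)'_-(r)$ via Corollary~\ref{Cor1} and Stach\'o's relation \eqref{Stach} (which is exactly what underlies Corollary~\ref{Cor3}), and then applies Lemma~\ref{L2} with reference point $r_0=a$ rather than $a/2$, giving $c=(b/a)^{d-1}(V_A)'_-(a)$. The only differences are cosmetic---your choice $r_0=a/2$ avoids the harmless boundary case $r=r_0$ in Lemma~\ref{L2} at the cost of a slightly larger constant, and you cite Corollary~\ref{Cor3} as a black box where the paper unpacks it.
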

\begin{proof}
By Corollary~\ref{Cor1}, $S_A(r)={\cal H}^{d-1}(\partial A_r)\le  {\cal H}^{d-1}(\partial A_{<r})\le {\cal M}^{d-1}(\partial A_{<r})\le (V_A)'_-(r)$
for all $r>0$. 
Hence, by Lemma~\ref{L2}, we get for all $r\in[a,b]$,
\[S_A(r)\le (V_A)'_-(r) \le \left(\frac ra \right)^{d-1} (V_A)'_-(a)\le \left(\frac ba \right)^{d-1} (V_A)'_-(a)=:c.
\]
\end{proof}

Let $F\subset\rd$ be a self-similar set generated by a function system $\{S_1,\ldots, S_N\}$ of contracting similarities $S_i:\rd\to\rd$ with contraction ratios $0<r_i<1$, $i=1,\ldots, N$. That is, $F$ is the unique nonempty, compact set invariant under the set mapping ${\bf S}(A)=\bigcup_i S_i(A)$, $A\in\rd$.  The set $F$ (or, more precisely, the system $\{S_1,\ldots,S_N\}$) is said to satisfy the \emph{open set
condition} (OSC) if there exists a non-empty, open and bounded
subset $O\subset\R^d$ 
 such that $\bigcup_i S_i O \subseteq O$ and $S_i O \cap
S_j O=\emptyset$ for all $i\neq j$. 
$F$ (or $\{S_1,\ldots,S_N\}$) is said to satisfy the \emph{strong open set condition} (SOSC), if there exist a set $O$ as in the OSC which additionally satisfies $O\cap F\neq\emptyset$. 
It is well known that OSC and SOSC are equivalent, cf.~\cite{schief}, i.e.\ for $F$ satisfying OSC, the open set $O$ can always be chosen such that $O\cap F\neq\emptyset$.

Let $D$ be the \emph{similarity dimension} of $F$, i.e.\ the unique solution $s=D$ of the equation $\sum_{i=1}^N r_i^s=1$. For $F$ satisfying OSC, $D$ coincides with the Minkowski dimension of $F$, $\dim_M F=D$. 
Finally, recall that a self-similar set $F$ is called \emph{arithmetic} (or \emph{lattice}), if there exists some number $h>0$ such
that $-\ln r_i \in h\mathbb{Z}$ for $i=1,\ldots, N$, i.e.\ if $\{-\ln r_1,\ldots,-\ln r_N\}$ generates a discrete subgroup of $\R$.
Otherwise $F$ is called \emph{non-arithmetic} (or \emph{non-lattice}). 

From the results of the previous section we immediatly derive
\begin{Proposition} \label{T3}
Let $F$ be a self-similar set satisfying OSC with similarity dimension $D<d$. Then $\overline{\dim}_S F=D$. Moreover, $\overline{\cal S}^D(F)<\infty$, i.e.\ $r^{D-d+1} S_F(r)$ is uniformly bounded as $r\to 0$.  
\end{Proposition}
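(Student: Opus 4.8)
The plan is to deduce both assertions directly from the dimension and content comparisons established in Section~3, the only genuinely external ingredient being the classical finiteness of the upper Minkowski content of self-similar sets satisfying OSC.

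First I would record the two elementary facts about $F$ that feed into the machinery. Because the similarity dimension satisfies $D<d$, the set $F$ has Minkowski dimension $D<d$ and hence vanishing Lebesgue measure, so that $V_F(0)=\Ha^d(F)=0$; moreover, as recalled above, $\dim_M F=D$ for self-similar sets obeying OSC, and in particular $\overline{\dim}_M F=D$. Since $V_F(0)=0$, Corollary~\ref{cor:equpper} then applies and gives $\overline{\dim}_S F=\overline{\dim}_M F=D$, which is the first claim.

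For the content bound I would again invoke Corollary~\ref{cor:equpper}. As $D<d$, its content inequality rearranges to
\[
\overline{\cal S}^D(F)\le\frac{d}{d-D}\,\overline{\cal M}^D(F),
\]
so that it suffices to know $\overline{\cal M}^D(F)<\infty$. This is the one point not internal to Section~3: for a self-similar set satisfying OSC one has $0<\underline{\cal M}^D(F)\le\overline{\cal M}^D(F)<\infty$, which is classical (cf.~Gatzouras~\cite{gatzouras}). Hence $\overline{\cal S}^D(F)<\infty$, and recalling the normalisation of $\overline{\cal S}^D$, this finiteness is exactly the assertion that $r^{D-d+1}S_F(r)$ remains bounded as $r\to 0$.

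I do not expect a real obstacle, since the statement is essentially a corollary: the substantive analytic work was already carried out in Lemma~\ref{L-34} and Corollary~\ref{cor:equpper}. The only care needed is to cite the correct external fact — the \emph{finiteness} (not merely positivity) of the upper Minkowski content for OSC self-similar sets — and to verify the harmless points that $D<d$ forces $V_F(0)=0$ and makes the constant $d/(d-D)$ finite. The sharper quantitative statements about $F$, namely the existence of ${\cal S}^D(F)$ in the non-arithmetic case and of its averaged analogue in general, are the business of the lemmas that follow and play no role in this boundedness result.
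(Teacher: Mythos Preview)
Your proof is correct and follows essentially the same route as the paper: both parts are deduced from Corollary~\ref{cor:equpper} together with the well-known facts that $\dim_M F=D$ and $\overline{\cal M}^D(F)<\infty$ for OSC self-similar sets. Your write-up is slightly more explicit (verifying $V_F(0)=0$ and spelling out the constant $d/(d-D)$), but the argument is the same.
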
 
\begin{proof}
The equation $\overline{\dim}_S F=D$ follows from Corollary~\ref{cor:equpper} and the well known fact that $D=\dim_M F$. The finiteness of $\overline{\cal S}^D(F)$ is a consequence of Corollary~\ref{cor:equpper} and the finiteness of the upper Minkowski content $\overline{\cal M}^D(F)$, which is well known for self-similar sets. 
\end{proof}

We will establish below, that for non-arithmetic sets $F$ even $\dim_S F=D$ holds.  
Now we consider the S-content ${\cal S}^D(F)$ of $F$. It turns out, that in general this limit does not exist. As for the Minkowski content, Cesaro averaging improves the convergence. For a compact set $A\subset\R^d$ and $0\le s<d$, we define the \emph{average $s$-dimensional S-content} 
$\widetilde{\cal S}^s(A)$ by
\begin{equation}\label{avS-content}
\widetilde{\cal S}^s(A)=\lim_{t\to 0}\frac{1}{|\log t|}\int_t^1 \frac{S_A(r)}{(d-s)\kappa_{d-s} r^{d-1-s}}d\log r 
\end{equation}
provided this limit exists, and we write  $\overline{\widetilde{\cal S}}^s(A)$ and $\underline{\widetilde{\cal S}}^s(A)$ for the corresponding upper and lower average limits. 

\begin{Theorem} \label{T4}
Let $F\subset\R^d$ be a self-similar set satisfying OSC and 
let $D<d$ be its similarity dimension.  
Then $\widetilde{\cal S}^D(F)$ of $F$  exists and coincides with the finite and strictly positive value 
\begin{equation}\label{Xeqn}
\frac{1}{\eta} \int_0^1 r^{D-d} R(r)\ d r,
\end{equation} 
where $\eta= - \sum_{i=1}^N r_i^D \ln r_i$ and the function $R:(0,1]\to \R$ is given by
\begin{equation} \label{R-def}
R(r) = {\cal H}^{d-1}(\partial F_r)-\sum_{i=1}^N \ind{(0,r_i]}(r) {\cal H}^{d-1}(\partial(S_i F)_r)\, .
\end{equation}
If $F$ is non-arithmetic, then  also ${\cal S}^D(F)$ of $F$ exists and equals the integral in \eqref{Xeqn}.
\end{Theorem}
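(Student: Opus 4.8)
The plan is to recast the statement as a classical renewal problem and apply the renewal theorem, in the spirit of Gatzouras's treatment of the Minkowski content. Writing $r_{\min}=\min_i r_i$ and using $(S_iF)_r=S_i(F_{r/r_i})$ together with $\Ha^{d-1}(S_i(\,\cdot\,))=r_i^{d-1}\Ha^{d-1}(\,\cdot\,)$, the defining relation \eqref{R-def} is just the rearrangement
\[
S_F(r)=R(r)+\sum_{i=1}^N\ind{(0,r_i]}(r)\,r_i^{d-1}S_F(r/r_i),\qquad r>0 .
\]
Multiplying by $r^{D-d+1}$ and substituting $r=e^{-t}$, $\ell_i:=-\ln r_i$, a short computation shows that $\zeta(t):=r^{D-d+1}S_F(r)\big|_{r=e^{-t}}$ satisfies
\[
\zeta(t)=\psi(t)+\sum_{i=1}^N r_i^{D}\,\ind{[\ell_i,\infty)}(t)\,\zeta(t-\ell_i),\qquad \psi(t):=e^{-(D-d+1)t}R(e^{-t}).
\]
Because $D$ solves $\sum_i r_i^D=1$, the measure $\mu:=\sum_i r_i^{D}\delta_{\ell_i}$ is a probability measure on $(0,\infty)$ with mean $\int s\,d\mu(s)=-\sum_i r_i^D\ln r_i=\eta$, so the last display is exactly the renewal equation $\zeta=\psi+\mu*\zeta$. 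By Corollary~\ref{Cor4}, $\zeta$ is locally bounded, hence it is \emph{the} locally bounded solution $\zeta=U*\psi$ with $U=\sum_{n\ge 0}\mu^{*n}$.

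Next I would verify that $\psi$ is directly Riemann integrable. On the compact $t$-range corresponding to $r\in[r_{\min},1]$, $\psi$ is bounded (Corollary~\ref{Cor4}) and continuous off the negligible set of critical radii of $F$ and of its first-level copies, hence Riemann integrable there. For the tail $r\le r_{\min}$ all indicators in \eqref{R-def} equal $1$, and from $\partial F_r\subseteq\bigcup_i\partial(S_iF)_r$ one gets $R(r)\le 0$ together with the estimate
\[
-R(r)\ \le\ \sum_{i\neq j}\Ha^{d-1}\!\bigl(\partial(S_iF)_r\cap(S_jF)_r\bigr),
\]
since any point of $\partial(S_iF)_r$ lost from $\partial F_r$ must lie in some $(S_jF)_{<r}$, $j\neq i$. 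The crux is to bound this overlap: under OSC the copies $S_iF$, $S_jF$ are geometrically separated, so the pairwise overlaps of their $r$-neighbourhoods concentrate near a lower-dimensional intersection set and admit a bound $-R(r)\le C\,r^{d-1-D'}$ with some $D'<D$; then $|\psi(t)|\le Ce^{-(D-D')t}$ decays exponentially, $\psi$ is directly Riemann integrable, and the change of variables $r=e^{-t}$ gives $\int_0^\infty\psi(t)\,dt=\int_0^1 r^{D-d}R(r)\,dr$. I expect this overlap estimate — converting the strong open set condition into quantitative control of the boundary lost to self-overlaps — to be the main obstacle; everything else is soft.

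Granting direct Riemann integrability, the conclusions follow from standard renewal theory applied to $\zeta=U*\psi$. In general (possibly arithmetic) case the Cesàro average of a renewal solution always converges, $\tfrac1T\int_0^T\zeta(u)\,du\to\tfrac1\eta\int_0^\infty\psi$. Unwinding $r=e^{-u}$, $d\log r=-du$, one checks that $\tfrac1{|\log t|}\int_t^1\frac{S_F(r)}{(d-D)\kappa_{d-D}r^{d-1-D}}\,d\log r$ is precisely $((d-D)\kappa_{d-D})^{-1}\tfrac1{|\log t|}\int_0^{|\log t|}\zeta(u)\,du$, so $\widetilde{\cal S}^D(F)$ exists and equals $\tfrac1\eta\int_0^1 r^{D-d}R(r)\,dr$ (up to the normalising factor $(d-D)\kappa_{d-D}$ carried by the definition of the S-content). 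If $F$ is non-arithmetic, $\mu$ is non-lattice and the key renewal theorem upgrades this to a genuine pointwise limit $\lim_{t\to\infty}\zeta(t)=\tfrac1\eta\int_0^\infty\psi$; since $\zeta(t)=r^{D-d+1}S_F(r)$, this is $\lim_{r\to0}r^{D-d+1}S_F(r)$, whence ${\cal S}^D(F)$ itself exists and equals the same integral.

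Finally, finiteness of the value is immediate from direct Riemann integrability (equivalently from $\overline{\cal S}^D(F)<\infty$, Proposition~\ref{T3}). For strict positivity I would compare with the average Minkowski content $\widetilde{\cal M}^D(F)$, which is strictly positive and finite for self-similar sets satisfying OSC (Gatzouras); the averaged analogue of Proposition~\ref{prop3} (valid, as noted in the remark, for the Kneser function $V_F$) links $\widetilde{\cal S}^D(F)$ to $\widetilde{\cal M}^D(F)$ and transfers positivity. This completes the scheme: a renewal reduction, the overlap-driven verification of direct Riemann integrability, and then the key and averaged renewal theorems for the non-arithmetic and general cases respectively.
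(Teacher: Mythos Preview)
Your approach is essentially the paper's: recast \eqref{R-def} as a renewal equation for $\zeta(t)=e^{(D-d+1)t}S_F(e^{-t})$, establish direct Riemann integrability of $\psi(t)=e^{(D-d+1)t}R(e^{-t})$ by proving a bound $|R(r)|\le c\,r^{d-1-D+\gamma}$ with $\gamma>0$, and invoke the (averaged, resp.\ key) renewal theorem. The paper packages these last two steps as Theorem~\ref{adrenewalthm} applied with $f=S_F$, $k=d-1$, $\varphi_k=R$.

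The one point worth sharpening is your heuristic for the overlap bound. Saying ``under OSC the copies $S_iF$, $S_jF$ are geometrically separated'' is not quite right: they may touch (Sierpinski gasket), and in general $(S_iF)_r\cap(S_jF)_r$ has no a priori reason to be small relative to $r^{d-1-D}$ without further input. The paper's route (Lemmas~\ref{keylem} and~\ref{R_est}) is not a direct pairwise estimate but goes through the \emph{strong} open set $O$: one shows $U:=\bigcup_{i\neq j}(S_iF)_r\cap(S_jF)_r\subseteq(\rd\setminus C)_r$ with $C=\bigcup_iS_iO$, reduces to bounding $\Ha^{d-1}(\partial F_r\cap(\rd\setminus C)_r)$, and then controls this by a counting argument (the number of level-$r$ cylinders meeting $(\rd\setminus C)_r$ is $O(r^{\gamma-D})$, by a result in \cite{winter} exploiting $O\cap F\neq\emptyset$). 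Your sign observation $R(r)\le 0$ for $r\le r_{\min}$ and the inclusion $\partial(S_iF)_r\setminus\partial F_r\subseteq\bigcup_{j\neq i}(S_jF)_r$ are correct and lead naturally into this.

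Your treatment of positivity via $\widetilde{\cal M}^D(F)$ (Gatzouras) and the averaged comparison is also what the paper does, though there it is split off as Theorem~\ref{Thm:avlim} and Lemma~\ref{lem:av-content}. Your parenthetical about the normalisation factor $(d-D)\kappa_{d-D}$ is well observed.
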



The proof of Theorem~\ref{T4} is postponed to the end of this section. 
We first discuss the relation of ${\cal S}^D(F)$ and the Minkowski content. If ${\cal S}^D(F)$ exists, i.e.\ if $F$ is non-arithmetic,
then both limits coincide.

\begin{Theorem}\label{T5}
Let $F$ be a non-arithmetic self-similar set satisfying OSC and let $D<d$ be its similarity dimension.  
Then ${\cal S}^D(F)= {\cal M}^D(F)$.  It follows, that ${\cal S}^D(F)>0$ and $\dim_S F = D$.
\end{Theorem}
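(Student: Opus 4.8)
The plan is to derive Theorem~\ref{T5} as a quick corollary of Theorem~\ref{T4} together with the asymptotic results of Section~3 and the known value of the Minkowski content of self-similar sets. The key observation is that for a non-arithmetic set~$F$, Theorem~\ref{T4} already asserts that the S-content $\mathcal{S}^D(F)$ exists (not merely the averaged version), and it is finite and strictly positive. Once we know that the genuine limit $\mathcal{S}^D(F)=\underline{\mathcal{S}}^D(F)=\overline{\mathcal{S}}^D(F)$ exists, we are exactly in the situation covered by Corollary~\ref{cor4}.

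First I would invoke the hypothesis that $F$ is a self-similar set satisfying OSC with $D<d$. This guarantees $V_F(0)=\mathcal{H}^d(F)=0$, since a set of Minkowski dimension $D<d$ has vanishing $d$-dimensional Lebesgue measure. This is the crucial side condition needed to apply Corollary~\ref{cor4}. Next, since $F$ is non-arithmetic, Theorem~\ref{T4} tells us that $\mathcal{S}^D(F)$ exists as a finite positive number, i.e.\ $\underline{\mathcal{S}}^D(F)=\overline{\mathcal{S}}^D(F)=\mathcal{S}^D(F)\in(0,\infty)$.

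Now I would apply Corollary~\ref{cor4} with $s=D$: because $V_F(0)=0$, we have the chain
\[
\underline{\mathcal{S}}^D(F)\le \underline{\mathcal{M}}^D(F)\le \overline{\mathcal{M}}^D(F)\le \overline{\mathcal{S}}^D(F).
\]
Since the two outer terms coincide (both equal $\mathcal{S}^D(F)$), the squeeze forces all four quantities to be equal. Hence the Minkowski content $\mathcal{M}^D(F)$ exists and equals $\mathcal{S}^D(F)$, which is the asserted identity $\mathcal{S}^D(F)=\mathcal{M}^D(F)$. The positivity $\mathcal{S}^D(F)>0$ is inherited directly from Theorem~\ref{T4}, and consequently $\mathcal{M}^D(F)>0$ as well. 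Finally, having $0<\mathcal{S}^D(F)<\infty$ for $s=D<d$ means, by the definition of the S-dimension, that $\underline{\dim}_S F=\overline{\dim}_S F=D$, so $\dim_S F=D$; alternatively this follows from combining Proposition~\ref{T3} (which gives $\overline{\dim}_S F=D$) with the coincidence of upper and lower contents just established.

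I do not expect any real obstacle here, as all the heavy lifting resides in Theorem~\ref{T4} (whose proof is deferred) and in the general squeeze of Corollary~\ref{cor4}; the only points requiring a word of care are verifying $V_F(0)=0$ so that Corollary~\ref{cor4} applies, and correctly reading off $\dim_S F=D$ from the finiteness and positivity of the S-content at $s=D$. The argument is genuinely a two-line deduction once Theorem~\ref{T4} is in hand.
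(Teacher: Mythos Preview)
Your proposal is correct and follows essentially the same route as the paper: invoke Theorem~\ref{T4} for the existence of $\mathcal{S}^D(F)$ in the non-arithmetic case, then apply the squeeze of Corollary~\ref{cor4} (noting $V_F(0)=0$) to obtain $\mathcal{S}^D(F)=\mathcal{M}^D(F)$. The only cosmetic difference is that you read off the positivity $\mathcal{S}^D(F)>0$ directly from the statement of Theorem~\ref{T4}, whereas the paper deduces it from Gatzouras's result $\mathcal{M}^D(F)>0$; both are valid and yield $\dim_S F=D$ immediately.
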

\begin{proof} Theorem~\ref{T4} states that ${\cal S}^D(F)$ exists if $F$ is non-arithmetic and $D<d$. Hence, the equality of the contents follows from Corollary~\ref{cor4}. The equality of the dimensions is a consequence of the fact that ${\cal M}^D(F)>0$ (see~\cite[Thm. 2.4]{gatzouras}).
\end{proof}

In the arithmetic case, an analogous result holds for the average contents. We derive it from the following lemma. Recall the definitions of  $\overline{\widetilde{\cal S}}^s(A)$ and $\underline{\widetilde{\cal S}}^s(A)$ from \eqref{avS-content}. Analogously, the \emph{average $s$-dimensional Minkowski content} is given by
\begin{equation}\label{avM-content}
\widetilde{\cal M}^s(A)=\lim_{t\to 0}\frac{1}{|\log t|}\int_t^1 \frac{V_A(r)}{\kappa_{d-s} r^{d-s}}d\log r 
\end{equation}
and the corresponding $\limsup$ and $\liminf$ are denoted by
$\overline{\widetilde{\cal M}}^s(A)$ and $\underline{\widetilde{\cal M}}^s(A)$. 
 
\begin{Lemma} \label{lem:av-content}
Let $A\subset\R^d$ be compact and $0\le s<d$. Then
\begin{enumerate}
\item[(i)] $\overline{\widetilde{\cal M}}^s(A)\ge \overline{\widetilde{\cal S}}^s(A)$ and $\underline{\widetilde{\cal M}}^s(A)\ge \underline{\widetilde{\cal S}}^s(A)$
\item[(ii)] If $\overline{\cal M}^s(A)<\infty$, then $\overline{\widetilde{\cal M}}^s(A)= \overline{\widetilde{\cal S}}^s(A)$ and $\underline{\widetilde{\cal M}}^s(A)= \underline{\widetilde{\cal S}}^s(A)$.
\end{enumerate}
\end{Lemma}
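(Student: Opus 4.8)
The plan is to reduce both averaged quantities to integrals against $d\log r = dr/r$ and to connect them by a single integration by parts, exploiting that $V_A$ is absolutely continuous with $V_A'(r)=S_A(r)$ for all but countably many $r$ (Corollary~\ref{Cor2}). For fixed $t\in(0,1)$ write
\[
\mu(t):=\frac{1}{|\log t|}\int_t^1 \frac{V_A(r)}{\kappa_{d-s}r^{d-s}}\,d\log r=\frac{1}{|\log t|\,\kappa_{d-s}}\int_t^1 V_A(r)\,r^{s-d-1}\,dr
\]
and
\[
\sigma(t):=\frac{1}{|\log t|}\int_t^1 \frac{S_A(r)}{(d-s)\kappa_{d-s}r^{d-1-s}}\,d\log r=\frac{1}{|\log t|\,(d-s)\kappa_{d-s}}\int_t^1 S_A(r)\,r^{s-d}\,dr,
\]
so that $\overline{\widetilde{\cal M}}^s(A)=\limsup_{t\to0}\mu(t)$, $\underline{\widetilde{\cal M}}^s(A)=\liminf_{t\to0}\mu(t)$, and likewise for $\sigma(t)$ and the S-contents. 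All integrals are finite for fixed $t>0$, since $S_A$ is bounded on $[t,1]$ by Corollary~\ref{Cor4} and $V_A$ is bounded and continuous there.

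Since $V_A$ is absolutely continuous and $r\mapsto r^{s-d}$ is smooth on $[t,1]$, integration by parts together with $S_A(r)=V_A'(r)$ a.e.\ gives
\[
\int_t^1 S_A(r)\,r^{s-d}\,dr=V_A(1)-V_A(t)\,t^{s-d}+(d-s)\int_t^1 V_A(r)\,r^{s-d-1}\,dr.
\]
Dividing by $|\log t|\,(d-s)\kappa_{d-s}$ and comparing with the expression for $\mu(t)$ yields the key identity
\[
\sigma(t)-\mu(t)=\frac{V_A(1)-V_A(t)\,t^{s-d}}{|\log t|\,(d-s)\kappa_{d-s}}.
\]
This single relation drives both parts of the lemma.

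For part (i) I would note that $V_A\ge0$, so $V_A(t)\,t^{s-d}\ge0$ and hence $\sigma(t)\le\mu(t)+\frac{V_A(1)}{|\log t|\,(d-s)\kappa_{d-s}}$. The correction term tends to $0$ as $t\to0$ (constant numerator, diverging denominator), so passing to $\limsup$ and to $\liminf$ gives $\overline{\widetilde{\cal S}}^s(A)\le\overline{\widetilde{\cal M}}^s(A)$ and $\underline{\widetilde{\cal S}}^s(A)\le\underline{\widetilde{\cal M}}^s(A)$, using that adding an $o(1)$ term affects neither the upper nor the lower limit.

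The substance of the lemma, and the main obstacle, lies in part (ii): one must control the subtracted boundary term $V_A(t)\,t^{s-d}$, and this is exactly where the hypothesis $\overline{\cal M}^s(A)<\infty$ enters. Without a bound, this nonnegative term could a priori grow with $t\to0$ fast enough relative to $|\log t|$ to keep $\sigma(t)-\mu(t)$ bounded away from $0$, so that the reverse inequality would fail. Under $\overline{\cal M}^s(A)<\infty$, however, the ratio $V_A(t)/(\kappa_{d-s}t^{d-s})$ is bounded by some constant $M$ for all small $t$, whence $V_A(t)\,t^{s-d}\le M\kappa_{d-s}$ and the whole right-hand side of the key identity tends to $0$. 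Thus $\sigma(t)=\mu(t)+o(1)$, and the upper and the lower average limits of the two contents coincide, giving $\overline{\widetilde{\cal M}}^s(A)=\overline{\widetilde{\cal S}}^s(A)$ and $\underline{\widetilde{\cal M}}^s(A)=\underline{\widetilde{\cal S}}^s(A)$.
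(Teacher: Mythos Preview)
Your proof is correct and follows essentially the same approach as the paper: both derive the identical key identity linking $\sigma(t)$ and $\mu(t)$ via the boundary terms $V_A(1)$ and $V_A(t)\,t^{s-d}$, and then argue parts (i) and (ii) from it in the same way. The only cosmetic difference is that you obtain the identity by a direct integration by parts on $[t,1]$, whereas the paper writes $V_A(r)=\int_0^r S_A(\rho)\,d\rho$ and interchanges the order of integration; your route is slightly cleaner in that it does not implicitly use $V_A(0)=0$.
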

\begin{proof}
For $0<t\le 1$, let 
\[
v^s(t):=\int_t^1 \frac{V_A(r)}{\kappa_{d-s}r^{d-s}} \frac{dr}{r} \quad\text{ and }\quad w^s(t):=\int_t^1 \frac{S_A(r)}{(d-s)\kappa_{d-s}r^{d-s-1}} \frac{dr}{r}.
\]
We show that
\begin{equation} \label{eq:v-w}
v^s(t)= w^s(t) +\frac{1}{d-s} \frac{V_A(t)}{\kappa_{d-s} t^{d-s}} - \frac{1}{(d-s)\kappa_{d-s}} V_A(1).
\end{equation}
By Corollary~\ref{Cor2}, we have
\[
v^s(t)=\int_t^1 \int_0^r S_A(\rho) d\rho \frac{dr}{\kappa_{d-s}r^{d-s+1}}. 
\]
Interchanging the order of integration, we get
\begin{eqnarray*}
v^s(t)&=&\frac 1{\kappa_{d-s}}\left[
\int_0^t S_A(\rho)  \int_t^1  \frac{dr}{r^{d-s+1}} d\rho + \int_t^1 S_A(\rho) \int_\rho^1 \frac{dr}{r^{d-s+1}} d\rho
\right]\\
&=& \frac 1{(d-s)\kappa_{d-s}}\left[ V_A(t) \left(\frac 1{t^{d-s}}-1\right) + \int_t^1 S_A(\rho) \left(\frac 1{\rho^{d-s}}-1\right) d\rho\right]\\
&=& \frac 1{d-s} \frac{V_A(t)}{\kappa_{d-s} t^{d-s}} +  w^s(t) +  \frac {1}{(d-s)\kappa_{d-s}} \left( -V_A(t)-V_A(1)+V_A(t)\right),
\end{eqnarray*} 
where we used again the relation $V_A(r)=\int_0^r S_A(\rho) d\rho$. This proves \eqref{eq:v-w}. 

Observe that the third term on the right in \eqref{eq:v-w} is constant. It vanishes, when dividing by $|\log t|$ and taking the limit as $t\to\infty$. The second term is non-negative.   
Let $(t_n)$ be a null sequence, such that 
\[
\lim_{n\to\infty} \frac{w^s(t_n)}{|\log t_n|}=\overline{\widetilde{\cal S}}^s(A).
\]
Then
\[
\overline{\widetilde{\cal M}}^s(A)\ge \limsup_{n\to\infty} \frac{v^s(t_n)}{|\log t_n|}\ge \overline{\widetilde{\cal S}}^s(A).
\]
Similarly the inequality $\underline{\widetilde{\cal M}}^s(A)\ge \underline{\widetilde{\cal S}}^s(A)$ is obtained by choosing a sequence $(t_n)$ such that $\underline{\widetilde{\cal M}}^s(A)$ is attained.

If $\overline{\cal M}^s(A)<\infty$ holds, then the second term on the right in \eqref{eq:v-w} is bounded by a constant. Hence, it vanishes when dividing by $|\log t|$ and taking the limit as $t\to\infty$. The stated equalities follow at once.
\end{proof}

\begin{Theorem}\label{Thm:avlim}
Let $F$ be a self-similar set satisfying OSC and let $D<d$ be the similarity dimension of $F$.  
Then $\widetilde{\cal S}^D(F)= \widetilde{\cal M}^D(F)$.
\end{Theorem}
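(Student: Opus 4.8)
The plan is to deduce the theorem directly from the two results immediately preceding it: the existence of the average S-content in Theorem~\ref{T4} and the comparison Lemma~\ref{lem:av-content}. The crucial observation is that part (ii) of Lemma~\ref{lem:av-content} upgrades the one-sided inequalities $\overline{\widetilde{\cal M}}^s(F)\ge\overline{\widetilde{\cal S}}^s(F)$ and $\underline{\widetilde{\cal M}}^s(F)\ge\underline{\widetilde{\cal S}}^s(F)$ from part (i) into genuine equalities, but only under the hypothesis $\overline{\cal M}^s(F)<\infty$. So the entire argument reduces to verifying this finiteness condition for the critical exponent $s=D$ and then reading off the conclusion. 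Accordingly, the first step I would take is to confirm $\overline{\cal M}^D(F)<\infty$.

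This finiteness is standard for self-similar sets satisfying OSC with $D=\dim_M F$, and it has in fact already been used in the proof of Proposition~\ref{T3}; one may simply cite it. Alternatively, if one prefers to stay within the machinery developed here, I would note that Proposition~\ref{T3} gives $\overline{\cal S}^D(F)<\infty$, and then Corollary~\ref{cor:equpper} (the inequality $\overline{\cal M}^s(F)\ge\frac{d-s}{d}\overline{\cal S}^s(F)$) does \emph{not} yield the bound in the needed direction, so one really does want the direct fact that the upper Minkowski content of a self-similar OSC set is finite. Either way, this step is routine given the hypotheses $D<d$ and OSC.

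With $\overline{\cal M}^D(F)<\infty$ in hand, the conclusion is immediate: Lemma~\ref{lem:av-content}(ii) applied with $s=D$ gives $\overline{\widetilde{\cal M}}^D(F)=\overline{\widetilde{\cal S}}^D(F)$ and $\underline{\widetilde{\cal M}}^D(F)=\underline{\widetilde{\cal S}}^D(F)$. By Theorem~\ref{T4} the limit $\widetilde{\cal S}^D(F)$ exists, so $\overline{\widetilde{\cal S}}^D(F)=\underline{\widetilde{\cal S}}^D(F)=\widetilde{\cal S}^D(F)$; combining this with the two equalities just obtained forces $\overline{\widetilde{\cal M}}^D(F)=\underline{\widetilde{\cal M}}^D(F)$. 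Hence $\widetilde{\cal M}^D(F)$ exists and equals $\widetilde{\cal S}^D(F)$, which is the claim. I do not expect a genuine obstacle in this final assembly: all the hard analysis has been absorbed into Theorem~\ref{T4} (whose proof presumably rests on renewal-theoretic arguments for the Cesàro average) and into the exact identity \eqref{eq:v-w} underlying Lemma~\ref{lem:av-content}. The present statement is, by design, a short corollary of those two ingredients plus the well-known finiteness of $\overline{\cal M}^D(F)$.
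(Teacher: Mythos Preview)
Your proposal is correct and follows essentially the same route as the paper: invoke Theorem~\ref{T4} for the existence of $\widetilde{\cal S}^D(F)$, note that $\overline{\cal M}^D(F)<\infty$ is a well-known fact for self-similar sets satisfying OSC, and then apply Lemma~\ref{lem:av-content}(ii). Your additional remark that Corollary~\ref{cor:equpper} does not by itself yield the needed finiteness bound is accurate and a nice sanity check, though the paper simply cites the standard result directly.
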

\begin{proof}
By Theorem~\ref{T4}, the average S-content of $F$ exists, i.e.~${\overline{\widetilde{\cal S}}}^D(F)=\underline{\widetilde{\cal S}}^D(F)$. Since $\overline{\cal M}^D(F)<\infty$ (as is well known and easily verified), Lemma~\ref{lem:av-content} (ii) implies the assertion. 
\end{proof}


The proof of Theorem~\ref{T4} is based on the following estimates. 
Fix a feasible set $O$ satisfying the SOSC, i.e.\ with $O\cap F\neq\emptyset$. Let $C:=\bigcup_{i=1}^N S_i O$. 
The following lemma gives an upper bound for the growth of the surface area of $F_r$ near the boundary of $C$ as $r\to 0$. 
\begin{Lemma}\label{keylem}
There exist constants $c,\gamma>0$ such that for all $0<r\le 1$ 
\[{\cal H}^{d-1}(\partial F_r \cap (\R^d\setminus C)_r)\le c r^{d-1-D+\gamma}. \]
\end{Lemma}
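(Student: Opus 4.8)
The plan is to reduce the claim to a counting estimate for scale-$r$ cylinders lying near $\partial C$, and then to extract the exponent gain $\gamma$ from the strong open set condition.

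First I would analyse the nearest-point structure. If $z\in\partial F_r\cap(\R^d\setminus C)_r$, then $z\in\{d_F=r\}$, so there is a nearest point $x\in F$ with $|z-x|=r$; moreover $d_{\R^d\setminus C}(z)\le r$, whence $d_{\R^d\setminus C}(x)\le 2r$. Since $F\subseteq\overline C$, a point of $F$ within $2r$ of the complement of $C$ must lie within $2r$ of $\partial C$, so $x\in F\cap(\partial C)_{2r}$. Consequently $\partial F_r\cap(\R^d\setminus C)_r\subseteq\partial F_r\cap\big(F\cap(\partial C)_{2r}\big)_r$, and it suffices to bound the $\Ha^{d-1}$-measure of the right-hand set.

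Next I would invoke the self-similar decomposition at scale $\rho:=r\,r_{\min}^{-1}$, where $r_{\min}=\min_i r_i$. Let $\Sigma(\rho)$ be the finite cut set of words $w=w_1\cdots w_n$ with $r_w<\rho\le r_w r_{w_n}^{-1}$, so that $r/r_w\in(r_{\min},1]$ for each $w\in\Sigma(\rho)$; then $F=\bigcup_{w\in\Sigma(\rho)}S_wF$ and hence $\partial F_r\subseteq\bigcup_{w\in\Sigma(\rho)}\partial(S_wF)_r$. A cylinder $S_wF$ can contribute to $\big(F\cap(\partial C)_{2r}\big)_r$ only if $\dist(S_wF,\partial C)\le Kr$ for a fixed $K$ (one checks $K=4$ works). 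Using the scaling $\Ha^{d-1}(\partial(S_wF)_r)=r_w^{d-1}\Ha^{d-1}(\partial F_{r/r_w})$ together with Corollary~\ref{Cor4} (which bounds $\Ha^{d-1}(\partial F_s)$ by a constant $c$ on $[r_{\min},1]$), each contributing term is at most $c\,r_w^{d-1}\le c\,(r\,r_{\min}^{-1})^{d-1}$. Intersecting the decomposition with $\big(F\cap(\partial C)_{2r}\big)_r$ and using subadditivity, I obtain
\[
\Ha^{d-1}\big(\partial F_r\cap(\R^d\setminus C)_r\big)\le c\,(r\,r_{\min}^{-1})^{d-1}\cdot\#\mathcal{N}(r),\qquad \mathcal{N}(r):=\{w\in\Sigma(\rho):\ \dist(S_wF,\partial C)\le Kr\}.
\]

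It then remains to show $\#\mathcal{N}(r)\le c'\,r^{-D+\gamma}$ for some $\gamma>0$; inserting this bound gives the asserted estimate $c''r^{d-1-D+\gamma}$. This last step is the main obstacle, since the crude count $\#\Sigma(\rho)\asymp r^{-D}$ is far too large and all the gain must come from the scarcity of cylinders near $\partial C$. Because $\Sigma(\rho)$ is a cut set one has $\sum_{w\in\Sigma(\rho)}r_w^D=1$ with each $r_w\asymp r$, so the desired count is equivalent to the weighted estimate $\sum_{w\in\mathcal{N}(r)}r_w^D\le c_0\,r^\gamma$; writing $\mu$ for the natural self-similar measure on $F$ (normalised so that $\mu(S_wF)=r_w^D$), this is in turn comparable to the measure decay $\mu\big(F\cap(\partial C)_{K'r}\big)\le c_0\,r^\gamma$. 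To produce such $\gamma>0$ I would exploit SOSC: fixing $y\in F\cap O$ and $\epsilon_0>0$ with $B(y,\epsilon_0)\subseteq O$, every word $w$ with $|w|\ge1$ satisfies $S_wO\subseteq C$ and $B(S_wy,r_w\epsilon_0)\subseteq S_wO$, so the point $S_wy\in F$ lies at distance at least $r_w\epsilon_0$ from $\R^d\setminus C$. This furnishes, inside each cylinder, a definite-size core of $\mu$-mass shielded from $\partial C$; pulling back by $S_i^{-1}$, proximity of a point of $S_iF$ to $\partial C$ within the cylinder $S_iO$ forces proximity of a point of $F$ to $\partial O$ at the rescaled radius, which sets up a renewal-type recursion for $\mu\big(F\cap(\partial C)_\delta\big)$. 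I expect this recursion to yield geometric decay in the refinement depth and hence the polynomial bound with some $\gamma>0$; equivalently, it expresses that the part of $F$ adjacent to $\partial C$ has box-counting dimension strictly below $D$. Making this decay rate explicit is the delicate point, and it is precisely here that the separation guaranteed by SOSC, rather than mere OSC, is essential.
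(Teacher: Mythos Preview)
Your reduction is essentially the paper's own argument: decompose $\partial F_r$ over a cut set $\Sigma(\rho)$ with $\rho\asymp r$, use the scaling $\Ha^{d-1}(\partial(S_wF)_r)=r_w^{d-1}\Ha^{d-1}(\partial F_{r/r_w})$ together with Corollary~\ref{Cor4} to bound each piece by $c\,r^{d-1}$, and reduce to counting those $w$ whose cylinder meets the $O(r)$-neighbourhood of $\R^d\setminus C$. (Your preliminary nearest-point step, replacing $(\R^d\setminus C)_r$ by $(F\cap(\partial C)_{2r})_r$, is a harmless detour; the paper works directly with the set $B=(\R^d\setminus C)_r$ in the decomposition, which is slightly cleaner.)

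The one place you and the paper diverge is exactly the step you flag as delicate. The paper does \emph{not} prove the counting bound $\#\mathcal N(r)\le c'r^{-D+\gamma}$ here; it quotes it from \cite[Lemma~5.4.1]{winter}. Your reformulation as the measure-decay estimate $\mu\bigl(F\cap(\partial C)_{K'r}\bigr)\le c_0\,r^{\gamma}$ is correct and is precisely the content of that lemma. Your sketch via SOSC is on the right track but, as written, only shows that \emph{some} mass is shielded at each scale; to get geometric decay you need the stronger observation that SOSC yields a fixed word $w_0$ with $S_{w_0}\overline O$ at positive distance $\delta_0$ from $\partial O$, so that inside \emph{every} first-level piece $S_iO$ the sub-cylinder $S_iS_{w_0}F$ lies at distance $\ge r_i\delta_0$ from $\partial(S_iO)\supseteq\partial C\cap S_iO$. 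Iterating, at each refinement step a fixed proportion $r_{w_0}^D$ of the remaining $\mu$-mass is pushed beyond the shrinking neighbourhood, and the recursion $p_{n+1}\le(1-r_{w_0}^D)p_n$ gives $\gamma>0$. With that completion your argument is the same as the paper's, just with the cited lemma unpacked.
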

\begin{proof} 
Let $ \Sigma^*:=\bigcup_{n=0}^\infty \{1,\ldots, N\}^n$ and, for $0<t\le 1$, let 
\begin{equation}\label{eq:Sigma_rho}
\Sigma(t)=\{w=w_1\ldots w_n\in\Sigma^*: r_w< t \le r_w r_{w_n}^{-1}\},
\end{equation}	
where $r_w:=r_{w_1}\ldots r_{w_n}$. Similarly, we will use $S_w:=S_{w_1}\circ \ldots \circ S_{w_n}$.	For convenience, let $\Sigma(t)$ for $t>1$ be the set just containing the empty sequence $\tau$ and set $r_{\tau}:=1$ and $S_\tau:={\rm id}$. 
Furthermore, let $r_{\min}:=\min_{1\le i\le N} r_i$. 

For a closed set $B\subseteq\R^d$ and 
$\eps>0$, let 
\begin{equation}\label{SigmaBdef}
\Sigma(B,\eps)=\{w\in\Sigma(\rho^{-1}\eps): (S_w F)_\eps\cap B \neq\emptyset\},
\end{equation}
where $\rho$ is a constant we will fix later.
First we show that there is a constant $c'>0$ (independent of $B$) such that for all $\eps>0$
\begin{equation}\label{eq:sigB}
{\cal H}^{d-1}(\partial F_\eps\cap B)\le c \,\#{\Sigma(B,\eps)}\, \eps^{d-1}. 
\end{equation}
For $\eps>0$, the relation $F_\eps\cap B = \bigcup_{w\in\Sigma(B,\eps)}(S_wF)_\eps\cap B$ implies that
\begin{eqnarray*} 
{\cal H}^{d-1}(\partial F_\eps \cap B)&\le& {\cal H}^{d-1}(\bigcup_{w\in\Sigma(B,\eps)} \partial (S_wF)_\eps )\\
&\le& \sum_{w\in\Sigma(B,\eps)} {\cal H}^{d-1}(\partial(S_wF)_\eps )  \\
&\le& \sum_{w\in\Sigma(B,\eps)} r_w^{d-1} {\cal H}^{d-1}(\partial F_{\eps/r_w}).
\end{eqnarray*}
By definition of $\Sigma(\rho^{-1}\eps)$, $a:=\rho<\eps/r_w\le \rho r_{\min}^{-1}=:b$. Hence, by Corollary~\ref{Cor4}, ${\cal H}^{d-1}(\partial F_{\eps/r_w})$ is bounded by some constant $c>0$ uniformly for all $\eps>0$ and $w\in\Sigma(\rho^{-1}\eps)$.
Since $r_w\le \rho^{-1} \eps$, we obtain
\begin{equation*} 
{\cal H}^{d-1}(\partial F_\eps \cap B)\le c \sum_{w\in\Sigma(B,\eps)} (\rho^{-1} \eps)^{d-1} = c' \#{\Sigma(B,\eps)} \eps^{d-1} ,
\end{equation*}
with $c':=\rho^{1-d} c$. This completes the proof of (\ref{eq:sigB}).

Now set $B:=(\R^d\setminus C)_r$. To derive an upper bound for the cardinality of $\Sigma((\R^d\setminus C)_r,\eps)$, we apply \cite[Lemma~5.4.1]{winter} with the choice $r=1$ and $\varepsilon=\delta$. Note that the set $O(1)$ in \cite{winter} equals $C$. The lemma requires to choose $\rho$ as in \cite[(5.1.8), also cf.~the paragraph preceding it]{winter}. 
We infer, that there are constants $\tilde c,\gamma>0$ such that 
\begin{equation} \label{eq:sigB2}  
{\#}\Sigma((\R^d\setminus C)_r ,\eps)\le \tilde c \eps^{\gamma-D} 
\end{equation}
for all $0<\eps\le\rho$. By adjusting the constant $\tilde c$, the estimate can be adapted to hold for all $r\in(0,1]$,  since for $\eps\ge\rho$, the cardinality of $\Sigma(B,\eps)$ is bounded by $\#\Sigma(B,\rho)$. 
Now the assertion follows by combining (\ref{eq:sigB}) and (\ref{eq:sigB2}).
\end{proof}

Applying Lemma~\ref{keylem}, we derive the following estimate for the function $R$ in \eqref{R-def}.

\begin{Lemma} \label{R_est}
There exist $c,\gamma > 0$ such that for all $0<\eps\le 1$ 
\begin{equation*}
|R(\eps)|\le c \eps^{d-1-s+\gamma}.
\end{equation*}
\end{Lemma}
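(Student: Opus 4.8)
The plan is to exploit the self-similar decomposition $F=\bigcup_{i=1}^N S_iF$, which gives $F_\eps=\bigcup_{i=1}^N (S_iF)_\eps$, and to show that the full boundary $\partial F_\eps$ and the individual piece boundaries $\partial (S_iF)_\eps$ differ only in a neighbourhood of $\partial C$, where $C=\bigcup_i S_iO$ is as in Lemma~\ref{keylem}; here I read the exponent $s$ in the statement as $s=D$, the similarity dimension, to match Lemma~\ref{keylem}. It suffices to treat $0<\eps\le r_{\min}:=\min_i r_i$: on the compact range $[r_{\min},1]$ the quantities ${\cal H}^{d-1}(\partial F_\eps)$ and ${\cal H}^{d-1}(\partial (S_iF)_\eps)$ are bounded by Corollary~\ref{Cor4} (applied to $F$ and to each $S_iF$), while $\eps^{d-1-D+\gamma}$ is bounded below by a positive constant there, so the estimate holds after enlarging $c$. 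For $\eps\le r_{\min}$ every indicator $\ind{(0,r_i]}(\eps)$ equals $1$, so $R(\eps)={\cal H}^{d-1}(\partial F_\eps)-\sum_i{\cal H}^{d-1}(\partial(S_iF)_\eps)$, and moreover $\eps/r_i\le 1$ for each $i$, which is what lets me re-apply Lemma~\ref{keylem} at the rescaled radii.

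The geometric core is a localisation argument. For each $k$ set $U_k:=\{y\in\rd:\overline B(y,\eps)\subseteq S_kO\}$; these are open and pairwise disjoint (since $U_k\subseteq S_kO$ and the $S_kO$ are disjoint), and I put $W:=\rd\setminus\bigcup_k U_k$. Using the OSC in the form $S_kO\cap\overline{S_iO}=\emptyset$ for $i\ne k$ (which follows from $S_kO$ being open, $S_kO\cap S_iO=\emptyset$, and $\partial(S_iO)\subseteq\partial C\subseteq\rd\setminus C$), I claim that on $U_k$ one has $F_\eps=(S_kF)_\eps$ while $(S_iF)_\eps\cap U_k=\emptyset$ for $i\ne k$: any point of $F$ within distance $\eps$ of $y\in U_k$ lies in $\overline B(y,\eps)\subseteq S_kO$, hence in $S_kF$. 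Since $U_k$ is open, this yields $\partial F_\eps\cap U_k=\partial(S_kF)_\eps\cap U_k$ and $\partial(S_iF)_\eps\cap U_k=\emptyset$ for $i\ne k$. Splitting the (finite) measures ${\cal H}^{d-1}(\partial F_\eps)$ and each ${\cal H}^{d-1}(\partial(S_iF)_\eps)$ along the partition $\{U_1,\dots,U_N,W\}$, the contributions over the $U_k$ cancel and I obtain
\[
R(\eps)={\cal H}^{d-1}(\partial F_\eps\cap W)-\sum_{i=1}^N{\cal H}^{d-1}(\partial(S_iF)_\eps\cap W),
\]
so that $|R(\eps)|\le{\cal H}^{d-1}(\partial F_\eps\cap W)+\sum_i{\cal H}^{d-1}(\partial(S_iF)_\eps\cap W)$.

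It remains to bound the two pieces, where Lemma~\ref{keylem} does the work, used twice. First I check $W\subseteq(\rd\setminus C)_\eps$: if $y\in W\cap C$, say $y\in S_kO$, then $\overline B(y,\eps)\not\subseteq S_kO$, and a segment leaving $S_kO$ first exits through a point of $\partial(S_kO)\subseteq\rd\setminus C$ at distance $\le\eps$; points of $W$ outside $C$ lie trivially in $\rd\setminus C$. Hence ${\cal H}^{d-1}(\partial F_\eps\cap W)\le{\cal H}^{d-1}(\partial F_\eps\cap(\rd\setminus C)_\eps)\le c\,\eps^{d-1-D+\gamma}$ by Lemma~\ref{keylem}. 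For the piece terms I rescale by $S_i^{-1}$: since $\partial(S_iF)_\eps=S_i(\partial F_{\eps/r_i})$ and ${\cal H}^{d-1}$ scales by $r_i^{d-1}$,
\[
{\cal H}^{d-1}(\partial(S_iF)_\eps\cap W)=r_i^{d-1}\,{\cal H}^{d-1}\bigl(\partial F_{\eps/r_i}\cap S_i^{-1}W\bigr).
\]
Now $S_i^{-1}W\subseteq(\rd\setminus S_i^{-1}C)_{\eps/r_i}\subseteq(\rd\setminus O)_{\eps/r_i}\subseteq(\rd\setminus C)_{\eps/r_i}$, where I use $O\subseteq S_i^{-1}C$ (from $S_iO\subseteq C$) for the middle inclusion and the feasibility $C\subseteq O$ for the last. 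Applying Lemma~\ref{keylem} at radius $\eps/r_i\le 1$ gives ${\cal H}^{d-1}(\partial F_{\eps/r_i}\cap S_i^{-1}W)\le c(\eps/r_i)^{d-1-D+\gamma}$, whence ${\cal H}^{d-1}(\partial(S_iF)_\eps\cap W)\le c\,r_i^{D-\gamma}\eps^{d-1-D+\gamma}$. Summing over the finitely many $i$ gives the finite constant $\sum_i r_i^{D-\gamma}$, and combining with the first bound yields $|R(\eps)|\le c'\eps^{d-1-D+\gamma}$. The main obstacle I anticipate is precisely the localisation of the second paragraph — showing cleanly that $\partial F_\eps$ coincides with $\partial(S_kF)_\eps$ on $U_k$ and that $W$ stays within $\eps$ of $\rd\setminus C$ — since the remainder is just the scaling behaviour of Hausdorff measure together with two invocations of Lemma~\ref{keylem}.
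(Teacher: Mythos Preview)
Your proof is correct and follows the same overall strategy as the paper: localise to a region where $\partial F_\eps$ and the individual $\partial(S_iF)_\eps$ agree, show the leftover is contained in $(\R^d\setminus C)_\eps$, apply Lemma~\ref{keylem} at the original and rescaled radii, and finish on $[r_{\min},1]$ via Corollary~\ref{Cor4}. The one genuine difference is the choice of cancellation region. The paper works inside $F_\eps$, setting $U:=\bigcup_{i\ne j}(S_iF)_\eps\cap(S_jF)_\eps$ and $B^j:=(S_jF)_\eps\setminus U$, and uses the elementary observation $\partial F_\eps\cap B^j=\partial(S_jF)_\eps\cap B^j$; it then proves as separate ``Facts'' that $U\subseteq(\R^d\setminus C)_\eps$ and $F_{\eps/r_j}\cap S_j^{-1}U\subseteq(\R^d\setminus C)_{\eps/r_j}$. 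You instead partition the ambient space via the sets $U_k=\{y:\overline B(y,\eps)\subseteq S_kO\}$, defined directly from the feasible open set. This makes the inclusion $W\subseteq(\R^d\setminus C)_\eps$ essentially tautological and the rescaled inclusion $S_i^{-1}W\subseteq(\R^d\setminus C)_{\eps/r_i}$ a clean chain using only $S_iO\subseteq C\subseteq O$; the price is that the cancellation step needs the (standard but unstated) fact $S_iF\subseteq\overline{S_iO}$, equivalently $F\subseteq\overline O$, to conclude $F\cap S_kO\subseteq S_kF$. Either decomposition works; yours leans a bit more on the open set, the paper's a bit more on the attractor pieces.
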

\begin{proof}
We abbreviate ${\cal H}:={\cal H}^{d-1}$. Fix $0<\eps<r_{\min}$. Set $U:=\bigcup_{i\neq j} (S_i F)_\eps\cap (S_j F)_\eps$ and $B^j:=(S_j F)_\eps\setminus U$. 
Then $F_\eps=\bigcup_j B^j\cup U$ is a disjoint union and so 
\[{\cal H} (\partial F_\eps)=\sum_{j=1}^N {\cal H} (\partial F_\eps\cap B^j)+{\cal H} (\partial F_\eps \cap U).\]
Similarly,
\[{\cal H} (\partial(S_j F)_\eps)={\cal H} (\partial(S_j F)_\eps \cap B^j)+{\cal H} (\partial(S_j F)_\eps\cap U),\]
since $(S_j F)_\eps\subseteq B^j\cup U$.
Hence $R(\eps)$ can be written as
\[R(\eps)= \sum_{j=1}^N \left({\cal H} (\partial F_\eps \cap B^j)-{\cal H} (\partial(S_jF)_\eps\cap B^j)\right) + {\cal H} (\partial F_\eps\cap U) - \sum_{j=1}^N {\cal H} (\partial (S_j F)_\eps\cap U).\]
Observe that $\partial F_\eps\cap B^j=\partial(S_jF)_\eps\cap B^j$. 
Therefore, all terms of the first sum on the right are zero. Taking absolute values, we infer 
\begin{equation}\label{Rabsest1}
|R(\eps)| \le {\cal H} (\partial F_\eps\cap U) + \sum_{j=1}^N {\cal H} (\partial(S_j F)_\eps\cap U).
\end{equation}
For the first term note that
$U\subseteq (\R^d\setminus C)_\eps$ (Fact I; see proof below). Recall that $C=\bigcup_{i=1}^N S_i O$.  
For the remaining terms in (\ref{Rabsest1})
we have 
\[ 
{\cal H}(\partial(S_jF)_\eps\cap U)= 
r_j^k {\cal H} (\partial F_{\eps/r_j}\cap S_j^{-1}U)
\le r_j^k {\cal H} (\partial F_{\eps/r_j}\cap (\R^d\setminus C)_{r_j/r}),
\]
where the inequality is due to the set inclusion $F_{\eps/r_j}\cap S_j^{-1}U \subseteq (O\setminus\R^d)_{\eps/r_j}\subseteq (\R^d\setminus C)_{\eps/r_j}$ (Fact II; see proof below).
We obtain for each $0<\eps\le r_{\min}$, 
\begin{equation}\label{Rabsest2}
|R(\eps)| \le {\cal H} (\partial F_\eps\cap (\R^d\setminus C)_\eps) + \sum_{j=1}^N r_j^{d-1} {\cal H} (\partial F_{\eps/r_j} \cap (\R^d\setminus C)_{\eps/r_j}).
\end{equation}
By Lemma~\ref{keylem}, for each of the terms in (\ref{Rabsest2}) there are constants $c,\gamma>0$ such that the term is bounded from above by $c \eps^{d-1-D+\gamma}$ for $0<\eps\le r_{\min}$. Hence we can also find such constants for $|R(\eps)|$. The estimate can be adapted to hold for all $0<r\le 1$ by suitably enlarging $c$, since, by Corollary~\ref{Cor4}, each of the terms of $R(r)$ in \eqref{R-def} is bounded by a constant for all $\eps\in [r_{\min},1]$. 
It remains to verify the two set inclusions used above.

\emph{Proof of Fact I ($U\subseteq (\R^d\setminus C)_\eps$)}: 
Let $x\in U$. We show that $d(x, \R^d\setminus C)\le\eps$ and thus $x\in(\R^d\setminus C)_\eps$.
Assume $d(x, \R^d\setminus C)>\eps$. Since the union $C=\bigcup_i S_i O$ is disjoint, there is  
a unique $j$ such that $x\in S_jO$. Moreover, $d(x,\partial S_jO)>\eps$. Since $x\in U$, there is at least one index $i\neq j$ such that $x\in(S_iF)_\eps$ and consequently a point $y\in S_iF$ with $d(x,y)\le\eps$. But then $y\in S_i F\cap S_j O$, a contradiction to OSC. Hence, $d(x, \R^d\setminus C)\le\eps\,$.

\emph{Proof of Fact II ($F_{\eps/r_j}\cap S_j^{-1}U\subseteq (O\setminus\R^d)_{\eps/r_j}$)}: Let $x\in F_{\eps/r_j}\cap S_j^{-1}U$. Then $S_jx\in U$ and so there exists at least one index $i\neq j$ with $S_jx\in (S_iF)_\eps$. Hence $d(S_jx,\partial S_jO)\le\eps$ since otherwise there would exist a point $y\in S_iF\cap S_jO$, a contradiction to OSC.
Therefore, $d(x,\partial O)\le \eps/r_j$, i.e.\ $x\in(O\setminus\R^d)_{\eps/r_j}$. 
\end{proof}

To complete the proof of Theorem~\ref{T4}, we apply the following slight improvement of Theorem~4.1.4 in \cite{winter}, in which we replace the assumption of continuity 
off a discrete set by continuity almost everywhere.

\begin{Theorem} \label{adrenewalthm}
Let $F$ be a self-similar set with ratios $r_1,\ldots,r_N$ and similarity dimension $D$. 
For a function $f:(0,\infty)\to\R$, suppose that for some $k\in\R$ the function $\varphi_k$ defined by 
\begin{equation*}
\varphi_k(\eps)=f(\eps)-\sum_{i=1}^N r_i^{k} \textbf{\upshape 1}_{(0,r_i]}(\eps)  f(\eps/r_i)
\end{equation*} 
is continuous at Lebesgue almost every $\eps>0$ and satisfies
\begin{equation}\label{adapteqn1}
 |\varphi_k(\eps)|\le c \eps^{k-D+\gamma}
\end{equation}
for some constants $c,\gamma>0$ and all $\eps>0$. 
Then $\eps^{D-k} f(\eps)$ is uniformly bounded in $(0,\infty)$ and the following holds:
\begin{enumerate}
\item[(i)] The limit $\underset{\delta\to 0}{\lim}\frac{1}{|\ln \delta|}\int_{\delta}^1 \eps^{D-k} f(\eps)\frac{d\eps}{\eps}$ exists and equals
\begin{equation}\label{adapteqn2}
\frac{1}{\eta} \int_0^1 \varepsilon^{D-k-1}
\varphi_k(\varepsilon)\ d\varepsilon \, ,
\end{equation}
where  $\eta= - \sum_{i=1}^N r_i^D \ln r_i$.
\item[(ii)] If $F$ 
 is non-arithmetic, then the limit of $\eps^{D-k} f(\eps)$ as $\eps\to 0$ exists and equals the expression in \eqref{adapteqn2}.
\end{enumerate}
\end{Theorem}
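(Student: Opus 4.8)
The plan is to linearise the self-similar recursion by the logarithmic substitution $\eps=e^{-t}$ and to recognise \eqref{adapteqn2} as the content of the classical renewal theorem. Setting $L_i:=-\ln r_i>0$, I would define for $t\in\R$
\[
z(t):=\eps^{D-k}f(\eps),\qquad g(t):=\eps^{D-k}\varphi_k(\eps),\qquad \eps=e^{-t}.
\]
Multiplying the defining relation for $\varphi_k$ by $\eps^{D-k}$, and using the identities $r_i^k\eps^{D-k}f(\eps/r_i)=r_i^D z(t-L_i)$ and $\mathbf 1_{(0,r_i]}(\eps)=\mathbf 1_{[L_i,\infty)}(t)$, one obtains the renewal equation
\[
z(t)=g(t)+\sum_{i=1}^N r_i^D\,\mathbf 1_{\{t\ge L_i\}}\,z(t-L_i),\qquad t\in\R.
\]
Here $\mu:=\sum_{i=1}^N r_i^D\,\delta_{L_i}$ is a probability measure on $(0,\infty)$, since $\sum_i r_i^D=1$ by the definition of $D$, with finite mean $\int s\,d\mu(s)=-\sum_i r_i^D\ln r_i=\eta$. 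Because $\mu$ is supported on $(0,\infty)$, the indicators annihilate every term with $t-L_i<0$, so the equation restricted to $t\ge 0$ involves only the values of $z$ on $[0,t]$ and of $g$ on $[0,\infty)$. In particular the behaviour of $f$ for $\eps>1$ is irrelevant, which is fortunate because the hypothesis bound degenerates there.

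The crux, and the only point where the weakened hypothesis enters, is to show that $g$ is directly Riemann integrable on $[0,\infty)$. The bound $|\varphi_k(\eps)|\le c\,\eps^{k-D+\gamma}$ gives $|g(t)|\le c\,e^{-\gamma t}$ for $t\ge 0$, so $g$ is bounded and dominated by the decreasing, integrable (hence directly Riemann integrable) function $t\mapsto c\,e^{-\gamma t}$. Since $\eps\mapsto-\ln\eps$ is a smooth diffeomorphism of $(0,\infty)$ onto $\R$, it maps Lebesgue-null sets to Lebesgue-null sets, so the assumed Lebesgue-a.e.\ continuity of $\varphi_k$ transfers to Lebesgue-a.e.\ continuity of $g$. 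A bounded, a.e.\ continuous function dominated by a directly Riemann integrable function is itself directly Riemann integrable; this is precisely the step where a.e.\ continuity suffices and continuity off a discrete set, as assumed in \cite[Thm.~4.1.4]{winter}, is not needed. Everything after this point is identical to the argument in \cite{winter}.

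Finally I would invoke the renewal theorem. Writing $U:=\sum_{n\ge 0}\mu^{*n}$ for the renewal measure, the solution of the half-line equation is $z=U*g$, and since $\sup_{t}U([t,t+1])<\infty$ while $g$ is directly Riemann integrable, $z$ — hence $\eps^{D-k}f(\eps)$ — is uniformly bounded. If $\mu$ is non-arithmetic, equivalently if $F$ is non-arithmetic, the key renewal theorem yields $z(t)\to\frac1\eta\int_0^\infty g(s)\,ds$ as $t\to\infty$, i.e.\ $\eps^{D-k}f(\eps)\to\frac1\eta\int_0^1 \eps^{D-k-1}\varphi_k(\eps)\,d\eps$ as $\eps\to0$, which is \eqref{adapteqn2} and proves (ii). For (i), the substitution turns the average into a Cesàro mean, $\frac1{|\ln\delta|}\int_\delta^1\eps^{D-k}f(\eps)\frac{d\eps}{\eps}=\frac1T\int_0^T z(t)\,dt$ with $T=|\ln\delta|$; in the non-arithmetic case this Cesàro mean inherits the limit from the convergence of $z(t)$, while in the arithmetic case the lattice renewal theorem makes $z(t)$ asymptotically periodic and averaging over one period again produces $\frac1\eta\int_0^\infty g$, so the average limit exists and equals \eqref{adapteqn2} in all cases.

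The main obstacle I anticipate is purely the direct Riemann integrability check under mere a.e.\ continuity, together with the bookkeeping needed to confirm that the half-line renewal equation is genuinely self-contained and insensitive to $f$ on $(1,\infty)$; once these are in place, the invocation of the renewal theorem and the evaluation of the limiting integral via change of variables are routine.
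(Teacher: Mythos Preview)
Your proposal is correct and follows essentially the same route as the paper: the logarithmic substitution $\eps=e^{-t}$ converts the self-similar recursion into a renewal equation with probability measure $\mu=\sum_i r_i^D\delta_{L_i}$ of mean $\eta$, and the only new ingredient beyond \cite[Thm.~4.1.4]{winter} is the observation that a.e.\ continuity together with the domination $|g(t)|\le c\,e^{-\gamma t}$ already yields direct Riemann integrability of the forcing term (the paper cites \cite[Prop.~4.1, p.~118]{Asmussen} for exactly this criterion). Your write-up is considerably more explicit than the paper's terse sketch---in particular you spell out the renewal equation, the half-line self-containment, and the Ces\`aro/periodicity argument for the arithmetic case---but the structure and the key technical point are identical.
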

 
\begin{proof}
The arguments in the proof of Theorem~4.1.4 in \cite{winter} can easily be adapted to derive the statement from the Renewal Theorem in Feller~\cite[p.363]{Feller}.
The assumptions on $\varphi_k$ ensure that the function $z:\R\to\R$ defined by
$z(t)= e^{(k-D)t}\varphi_k(e^{-t})$ for $t\ge 0$ and $z(t)=0$ for $t<0$
is directly Riemann integrable, see \cite[p.362]{Feller} or \cite[p.118]{Asmussen} for a definition. (If $z$ is bounded and continuous Lebesgue a.e.\ and bounded from above and below by some directly Riemann integrable functions, then $z$ is directly Riemann integrable, cf.\ for instance \cite[Prop.~4.1, p.118]{Asmussen}. Clearly, $e^{-\gamma t}$ is directly Riemann integrable.)
\end{proof}

\begin{proof}[Proof of Theorem~\ref{T4}] Apply Theorem~\ref{adrenewalthm} with $f(r):={\cal H}^{d-1}(\partial F_r)$, $k:=d-1$ and $\varphi_k(r):=R(r)$. Lemma~\ref{R_est} ensures that the hypothesis \eqref{adapteqn1} is satisfied. The continuity of $R$ a.e.\ follows from the same property of $\Ha^{d-1}(\partial A_r)$ for sets $A\subseteq \R^d$ (cf.\ Corollary~\ref{Cor2} and \cite[Lemma~2, p.367]{Stacho}).
\end{proof}

\section{Applications to random sets}

A {\it random compact set} in $\rd$ is a measurable mapping 
$$Z: (\Omega,{\cal A},\Pr)\to ({\cal K}',{\cal B}({\cal K}')),$$
where ${\cal K}'$ is the family of all nonempty compact subsets of $\rd$ and ${\cal B}({\cal K}')$ is the Borel $\sigma$-algebra on ${\cal K}'$ equipped with the Hausdorff distance (cf.\ \cite{M75}).
 
\begin{Theorem} \label{TR}
 Let $Z\subseteq\rd$ be a random compact set. If the function $r\mapsto \E V_Z(r)$ is differentiable at some point $r>0$, then 
$V_Z'(r)={\cal H}^{d-1}(\partial Z_r)$ almost surely and $(\E V_Z)'(r)=\E{\cal H}^{d-1}(\partial Z_r)$.
\end{Theorem}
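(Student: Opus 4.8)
The plan is to transfer the deterministic identity $V_A'(r)={\cal H}^{d-1}(\partial A_r)$ of Corollary~\ref{Cor2} to the random setting via an exchange of differentiation and expectation, the whole argument resting on the pointwise two-sided bound
\[
(V_Z)'_+(r)\le {\cal H}^{d-1}(\partial Z_r)\le (V_Z)'_-(r),
\]
which holds for every realisation and every $r>0$; this is exactly the chain of inequalities assembled before Corollary~\ref{Cor2}, combining Corollary~\ref{Cor1}, Corollary~\ref{Cor3} and $(V_A)'_+(r)={\cal H}^{d-1}(\partial_+ A_r)\le{\cal H}^{d-1}(\partial A_r)$. First I would dispose of measurability and integrability. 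For fixed $r$ the map $\omega\mapsto V_Z(r)={\cal H}^d(Z_r)$ is measurable for a random compact set; since the one-sided derivatives exist for each $\omega$ by Stach\'o \cite{Stacho}, they can be written as $\lim_{n} n\,(V_Z(r\pm 1/n)-V_Z(r))$ and are measurable as well. Integrability is controlled by the Kneser estimate ${\cal H}^{d-1}(\partial Z_{r})={\cal M}^{d-1}(\partial Z_r)\le \tfrac dr(V_Z(r)-V_Z(0))\le \tfrac dr V_Z(r)$ from Corollary~\ref{Cor1} and \cite[Satz~4]{Kneser}, together with $\E V_Z(r)<\infty$, which is implicit in the differentiability hypothesis.

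The core step is the identity $(\E V_Z)'_\pm(r)=\E[(V_Z)'_\pm(r)]$, to be obtained by dominated convergence. Using the absolute continuity of $V_Z$, the right difference quotient equals $\tfrac1t\int_r^{r+t}(V_Z)'_+(s)\,ds$. Fixing $r_0\in(0,r)$ and invoking Stach\'o's monotonicity of $s^{1-d}(V_Z)'_+(s)$ (the engine of Lemma~\ref{L2}), one has $(V_Z)'_+(s)\le (s/r_0)^{d-1}(V_Z)'_+(r_0)$ for $s\ge r_0$, so for $0<t\le 1$ the quotient is dominated by $Y:=((r+1)/r_0)^{d-1}(V_Z)'_+(r_0)$. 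This $Y$ is integrable because $(V_Z)'_+(r_0)\le {\cal H}^{d-1}(\partial Z_{r_0})\le \tfrac{d}{r_0}V_Z(r_0)\le \tfrac{d}{r_0}V_Z(r)$. Letting $t\to 0$ and interchanging limit and integral yields $(\E V_Z)'_+(r)=\E[(V_Z)'_+(r)]$; the left derivative is handled identically on $[r_0,r]$, with the same dominating function.

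Finally I would combine the pieces. Differentiability of $\E V_Z$ at $r$ gives $(\E V_Z)'_+(r)=(\E V_Z)'_-(r)$, hence $\E[(V_Z)'_-(r)-(V_Z)'_+(r)]=0$; as the integrand is nonnegative by the pointwise bound, $(V_Z)'_+(r)=(V_Z)'_-(r)$ almost surely, i.e.\ $V_Z$ is differentiable at $r$ a.s. The squeeze then forces $V_Z'(r)={\cal H}^{d-1}(\partial Z_r)$ a.s.\ (which incidentally supplies the measurability of the surface area, since it now agrees a.s.\ with the measurable function $(V_Z)'_+(r)$), and taking expectations gives $(\E V_Z)'(r)=\E[(V_Z)'_+(r)]=\E\,{\cal H}^{d-1}(\partial Z_r)$. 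I expect the only genuine obstacle to be the construction of the integrable dominating function for the exchange of limit and expectation; once the Kneser scaling bound of Lemma~\ref{L2} is used to reduce everything to the integrability of $V_Z(r)$, the remaining steps are routine.
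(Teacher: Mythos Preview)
Your proof is correct and follows essentially the same route as the paper: both use the pointwise sandwich $(V_Z)'_+(r)\le\mathcal H^{d-1}(\partial Z_r)\le(V_Z)'_-(r)$, establish $(\E V_Z)'_\pm(r)=\E[(V_Z)'_\pm(r)]$ by dominated convergence with the Kneser--Stach\'o scaling bound of Lemma~\ref{L2} supplying the dominating variable, and then conclude from $\E[(V_Z)'_-(r)-(V_Z)'_+(r)]=0$. The only cosmetic difference is that you bound the dominating variable directly via $(V_Z)'_+(r_0)\le\tfrac d{r_0}V_Z(r_0)$ (Kneser's Satz~4), whereas the paper shows $\E(V_Z)'_-(r_0)<\infty$ by a short contradiction argument; your version is slightly more explicit about measurability but otherwise the arguments coincide.
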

 
\begin{proof}
First we show that
\begin{equation}  \label{E0}
\E (V_Z)'_-(r)<\infty\quad\mbox{for all } r>0.
\end{equation}
Indeed, if \eqref{E0} would not hold for some $r>0$ then, due to Lemma~\ref{L2}, we would have $\E (V_Z)'_-(s)=\infty$ for all $s<r$, which would imply (using Tonelli's theorem)
$$\E V_Z(r_0)=\int_0^{r_0}\E(V_Z)'_-(s)\, ds=\infty,\quad r_0<r,$$
contradicting the assumptions. 

Next, let $r>0$ be such that $(\E V_Z)'(r)$ exists. We show that
\begin{equation} \label{E1}
\E (V_Z)'_-(r)=(\E V_Z)'_-(r),\quad \E (V_Z)'_+(r)=(\E V_Z)'_+(r).
\end{equation} 
Choose any $0<r_0<r$.
Since $V_Z(r)-V_Z(r-t)=\int_{r-t}^r (V_Z)'_-(s)\, ds$ for any $t<r-r_0$, we have 
$$\frac{V_Z(r)-V_Z(r-t)}{t}\leq\sup_{r-t<s<r}(V_Z)'_-(s)\leq \left( \frac r{r_0}\right)^{d-1}(V_Z)'_-(r_0)$$
by Lemma~\ref{L2}.
The last random variable is integrable by \eqref{E0}, hence, applying the Lebesgue Dominated Theorem for $t\to 0_+$, the first equality in \eqref{E1} is verified. The second identity follows analogously.
Consequently, we have
$$0=(\E V_Z)'_-(r)-(\E V_Z)'_+(r)=\E\left((V_Z)'_-(r)-(V_Z)'_+(r) \right),$$
and, hence, the left and right hand side derivatives are equal and $(V_Z)'(r)={\cal H}^{d-1}(\partial Z_r)$ almost surely. Using \eqref{E1} again, we can apply expectation on both sides and interchange derivative with expectation on the left hand side, which completes the proof.
\end{proof}

Let now $Z=S_t$ be the trajectory of a Brownian motion up to given time $t>0$ and let $S_{r,t}$ be its $r$-parallel set (called also Wiener sausage). It is shown in \cite[Theorem~2.2]{RSS07} and \cite[Theorem~4.5]{Last06} that for all $t>0$, $\E\Ha^{d-1}(S_{r,t})=\frac{\partial}{\partial r}\E \Ha^d(\partial S_{r,t})$ for almost all $r>0$, where the words ``almost all'' can be dropped in dimension $d\leq 3$. (An exact formula for the mean volume of the Wiener sausage is known, see \cite{RSS07} and the references therein.) As a corollary of Theorem~\ref{TR}, we can show that the above mentioned results are true for all $r>0$ in any dimension.

\begin{Corollary}
Let $S_{r,t}$ be the parallel $r$-neighbourhood of the trajectory of a standard Brownian motion in $\rd$ on the time interval $[0,t]$, where $r,t>0$ are arbitrary fixed. Then
\begin{eqnarray*}
\lefteqn{\E \Ha^{d-1}(\partial S_{r,t})=\frac{\partial}{\partial r}\E \Ha^d(S_{r,t})}\\
&=&d\kappa_dr^{d-1}\left( 1+(d-2)^2\frac t{2r^2}+\frac{4d}{\pi^2}
\int_0^{\infty}\frac{\varphi_d(y^2\frac{t}{2r^2})}{y^3(J_{\frac{d-2}2}^2(y)+Y_{\frac{d-2}2}^2(y))}\, dy\right) ,
\end{eqnarray*}
where $\varphi_d(z)=1-e^{-z}-2ze^{-z}/d$ and $J_\nu$, $Y_\nu$ are the Bessel functions of first and second type, respectively, and order $\nu$. 
\end{Corollary}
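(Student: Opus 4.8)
The plan is to apply Theorem~\ref{TR} to the specific random compact set $Z=S_t$, the Brownian trajectory on $[0,t]$, and then identify the resulting derivative with the explicit Bessel-integral expression via the known formula for the mean volume of the Wiener sausage. First I would verify that the hypotheses of Theorem~\ref{TR} are met. The key point is that $r\mapsto\E V_{S_t}(r)=\E\Ha^d(S_{r,t})$ is differentiable at every $r>0$: this follows because an exact closed-form expression for $\E\Ha^d(S_{r,t})$ is available in the literature (see \cite{RSS07} and the references therein), and that expression is manifestly smooth in $r>0$. Granting differentiability at the fixed $r>0$, Theorem~\ref{TR} immediately yields the first equality, namely that $\E\Ha^{d-1}(\partial S_{r,t})=(\E V_{S_t})'(r)=\frac{\partial}{\partial r}\E\Ha^d(S_{r,t})$, with no exceptional null set of $r$-values. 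This is precisely the improvement over \cite[Theorem~2.2]{RSS07} and \cite[Theorem~4.5]{Last06}, where the identity was only known for almost all $r$ (respectively for all $r$ only when $d\le 3$).

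Next I would carry out the actual differentiation. The strategy is to take the known exact formula for the mean volume $\E\Ha^d(S_{r,t})$ of the Wiener sausage, differentiate it with respect to $r$, and check that the result matches the stated right-hand side involving $\varphi_d(z)=1-e^{-z}-2ze^{-z}/d$ and the Bessel functions $J_\nu,Y_\nu$ of order $\nu=\frac{d-2}{2}$. Since the mean-volume formula is typically itself expressed as a similar Bessel integral (after a scaling substitution reducing the dependence on $r$ and $t$ to the single variable $z=y^2 t/(2r^2)$), this step reduces to a routine but somewhat delicate chain-rule computation: differentiating the prefactor $r^{d}$-type terms and the integrand, and reorganising the result by combining the polynomial-in-$r$ boundary contributions with the derivative of the Bessel integral. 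One must be careful to justify differentiating under the integral sign, which is licensed by the decay of the integrand guaranteed by the factor $(J_{\frac{d-2}2}^2(y)+Y_{\frac{d-2}2}^2(y))^{-1}y^{-3}$ and the boundedness of $\varphi_d$ and its derivative.

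The second identity in Theorem~\ref{TR}, that $\E\Ha^{d-1}(\partial S_{r,t})$ equals the expectation of the almost-sure derivative, is already built into the application and requires nothing further. Thus the corollary is essentially a two-ingredient assembly: Theorem~\ref{TR} supplies the exchange of derivative and expectation and removes the almost-everywhere caveat, while the explicit Wiener-sausage volume formula supplies the closed form.

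The main obstacle I anticipate is not conceptual but computational and bibliographic: one must locate the precise form of the exact mean-volume formula for $\E\Ha^d(S_{r,t})$ in dimension $d$ (the references indicate it is known), confirm it is differentiable in $r$ on all of $(0,\infty)$, and then verify that its $r$-derivative collapses exactly to the displayed expression. The delicate part of that computation is tracking the scaling variable $z=y^2t/(2r^2)$ through the derivative, since $r$ appears both in overall powers and inside the argument of $\varphi_d$; handling the resulting terms so that they recombine into the clean form with the constant $(d-2)^2\frac{t}{2r^2}$ and the single integral is where the bookkeeping must be done with care.
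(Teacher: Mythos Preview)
Your approach is correct and matches the paper's: the corollary is stated without a separate proof, as the preceding paragraph already explains that the known exact formula for $\E\Ha^d(S_{r,t})$ (from \cite{RSS07}) is differentiable in $r$, so Theorem~\ref{TR} applies and removes the ``almost all $r$'' restriction. One minor simplification: you need not redo the Bessel-integral differentiation yourself, since that computation was already carried out in \cite{RSS07} (where the displayed formula for the derivative appears); the only new content here is the upgrade from ``almost all $r$'' to ``all $r$'' via Theorem~\ref{TR}.
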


In the sequel, we shall consider the asymptotic behaviour of the volume and surface area of a Brownian motion $Z=S_1$ on the time interval $[0,1]$. (Note that related results for $S_t$ with a general $t>0$ can be easily derived, using the stochastic self-similarity of the Brownian motion.) The asymptotic behaviour of the volume of a parallel set is well known, both almost surely and in the mean. We have, both almost surely and in the mean (see \cite{LeGall88})
\begin{eqnarray}
\lim_{r\to 0} |\log r|\Ha^2(Z_r)=\pi,\quad &&d=2,\label{q1}\\
\lim_{r\to 0}\frac{\Ha^d(Z_r)}{\kappa_{d-2}r^{d-2}} =(d-2)\pi ,\quad &&d\geq 3.\label{q2}
\end{eqnarray}
From the known integral representation of the mean volume, it has been derived in \cite{RSS07} that
\begin{eqnarray}
\lim_{r\to 0}r|\log r|^2\E\Ha^1(\partial Z_{r})=\pi,\quad &&d=2,\label{q3}\\
\lim_{r\to 0}\frac{\E\Ha^{d-1}(\partial Z_{r})}{(d-2)\kappa_{d-2} r^{d-3}}=(d-2)\pi ,\quad &&d\geq 3.\label{q4}
\end{eqnarray}

Equations \eqref{q1} and \eqref{q2} imply immediately that $\dim_MZ=2$ almost surely (for $d\ge 2$).
Unfortunately, S-content and S-dimension of $Z$ cannot be derived so easily. Using the methods from Section~3, we get the following estimates.

\begin{Proposition}
If $d=2$ we have almost surely
\begin{eqnarray}
\label{p1} \limsup_{r\to 0}r|\log r|\Ha^1(\partial Z_r)&\leq& 2\pi,\\
\label{p2} \liminf_{r\to 0}\sqrt{|\log r|}\Ha^1(\partial Z_r)&\geq& 2\pi.
\end{eqnarray}
Hence, $1\leq\underline{\dim}_SZ\leq \overline{\dim}_SZ =2$ almost surely.

For $d\geq 3$, we have almost surely
\begin{eqnarray}
\label{p3} \limsup_{r\to 0}\frac{\Ha^{d-1}(\partial Z_r)}{(d-2)\kappa_{d-2}r^{d-3}}&\leq& \frac{(d-2)^2}d\pi,\\
\label{p4} \liminf_{r\to 0}\frac{\Ha^{d-1}(\partial Z_r)}{(d-2)\kappa_{d-2}r^{d-3-2/d}}&>&0 .
\end{eqnarray}
Hence, $2-\frac 2d\leq\underline{\dim}_SZ\leq\overline{\dim}_SZ=2$ almost surely.
\end{Proposition}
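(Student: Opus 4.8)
The plan is to transfer everything to the parallel volume $V_Z$, whose small-$r$ behaviour is recorded in \eqref{q1}--\eqref{q2}, and to do this pathwise: for $\Pr$-almost every $\omega$ the realisation $Z=Z(\omega)$ is a compact set with $V_Z(0)=\Ha^d(Z)=0$, so every deterministic result of Sections~2 and~3 applies to it. I would fix the event of full probability on which $V_Z(0)=0$ and the limits \eqref{q1}--\eqref{q2} hold, and argue on it. Two opposite deterministic mechanisms drive the two sides: Stach\'o's monotonicity for the upper estimates \eqref{p1} and \eqref{p3}, and the isoperimetric inequality for the lower estimates \eqref{p2} and \eqref{p4}.

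For the upper bounds the key pathwise inequality is
\[
S_Z(r)\le (V_Z)'_-(r)\le \tfrac dr\,V_Z(r).
\]
The first step is Corollary~\ref{Cor3}; the second follows by writing $V_Z(r)=\int_0^r(V_Z)'(\rho)\,d\rho$ and using that $\rho\mapsto\rho^{1-d}(V_Z)'(\rho)$ is non-increasing (Stach\'o \cite[Theorem~1]{Stacho}), which gives $(V_Z)'(\rho)\ge(\rho/r)^{d-1}(V_Z)'_-(r)$ for $\rho<r$ and hence $V_Z(r)\ge\tfrac rd(V_Z)'_-(r)$. For $d=2$ this reads $r|\log r|\,S_Z(r)\le 2|\log r|\,V_Z(r)$, so \eqref{q1} yields \eqref{p1} at once. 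For $d\ge3$, dividing the same inequality by $(d-2)\kappa_{d-2}r^{d-3}$ bounds the quantity in \eqref{p3} by $\tfrac{d}{d-2}\,V_Z(r)/(\kappa_{d-2}r^{d-2})$, which by \eqref{q2} tends to an explicit multiple of $\pi$; equivalently this is Corollary~\ref{cor:equpper} with $s=2$, i.e.\ $\overline{\cal S}^2(Z)\le\tfrac{d}{d-2}\overline{\cal M}^2(Z)$.

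For the lower bounds I would invoke the isoperimetric inequality exactly as in the proof of Proposition~\ref{isoperi}, namely $S_Z(r)=\Ha^{d-1}(\partial Z_r)\ge d\kappa_d^{1/d}\,V_Z(r)^{(d-1)/d}$ (Federer \cite[3.2.43]{F69}). For $d=2$ this is $S_Z(r)\ge 2\sqrt\pi\,\sqrt{V_Z(r)}$, and \eqref{q1} gives $\sqrt{|\log r|}\,S_Z(r)\ge 2\pi(1+o(1))$, which is \eqref{p2}. For $d\ge3$, \eqref{q2} makes $V_Z(r)^{(d-1)/d}$ comparable to $r^{(d-2)(d-1)/d}$, so $S_Z(r)$ is bounded below by a positive multiple of $r^{(d-1)(d-2)/d}=r^{d-3+2/d}$, giving \eqref{p4} (this exponent is the one attached to $s=2-\tfrac2d$). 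The dimension assertions then follow without further computation: $\overline{\dim}_S Z=\overline{\dim}_M Z=2$ by Corollary~\ref{cor:equpper} (here $V_Z(0)=0$ and $\dim_M Z=2$ are used), while Proposition~\ref{isoperi} gives $\underline{\dim}_S Z\ge\tfrac{d-1}d\,\underline{\dim}_M Z$, equal to $1$ when $d=2$ and to $2-\tfrac2d$ when $d\ge3$.

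The main obstacle — and the reason one obtains only one-sided estimates and a dimension interval rather than an exact S-content or S-dimension — is that, unlike for $V_Z$, there is no direct almost sure asymptotic law for $S_Z(r)$; the sharp surface results \eqref{q3}--\eqref{q4} concern only the expectation. Consequently the a.s.\ behaviour of $S_Z$ has to be squeezed through $V_Z$, and the two available routes, Stach\'o monotonicity from above and isoperimetry from below, are genuinely different inequalities fed with the same volume asymptotics; they produce different constants and, for $d\ge3$, different powers of $r$. I expect the hardest point to be whether these two sides can be made to meet — equivalently, whether the oscillation of $S_Z(r)=V_Z'(r)$ about its mean can be controlled well enough to close the dimension gap $[\,2-\tfrac2d,\,2\,]$ for $d\ge3$; the present pathwise tools do not seem to settle this.
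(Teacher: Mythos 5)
Your proposal is correct in substance and, for three of the four estimates, follows the paper's own route: \eqref{p2} is the isoperimetric inequality $S_Z(r)\ge 2\sqrt{\pi V_Z(r)}$ combined with \eqref{q1}; \eqref{p3} is Lemma~\ref{L-34} (equivalently Corollary~\ref{cor:equpper} with $s=2$) combined with \eqref{q2}; and \eqref{p4} is the isoperimetric argument of Proposition~\ref{isoperi} fed with \eqref{q2}. The pathwise reduction to the full-measure event where $\Ha^d(Z)=0$ and \eqref{q1}--\eqref{q2} hold is also how the paper proceeds. The one place where you genuinely diverge is \eqref{p1}: since the normalisation $r|\log r|$ is not of power type, the paper cannot invoke Lemma~\ref{L-34} and instead reruns its proof by hand --- a contradiction argument along a subsequence $r_i\to 0$, telescoping $V_Z(r_i)\ge\sum_j S_Z(r_j)(r_j^2-r_{j+1}^2)/(2r_j)$ via Stach\'o's monotonicity and using the concavity of $t\mapsto|\log t|^{-1}$ to bound the sum below by $|\log r_i|^{-1}$. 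Your route via the pointwise Kneser bound $S_Z(r)\le\frac dr\bigl(V_Z(r)-V_Z(0)\bigr)$ (which the paper itself records, citing Kneser's Satz~4, in the remark justifying ${\cal S}^d(A):=0$) gives $r|\log r|\,S_Z(r)\le 2|\log r|\,V_Z(r)\to 2\pi$ in one line with the same constant; this is simpler than the paper's argument and valid, the left-derivative version following from Corollary~\ref{Cor3} together with the monotonicity of $s^{1-d}(V_Z)'_+(s)$ exactly as you indicate.

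Two discrepancies should not be hidden behind the phrase ``an explicit multiple of $\pi$''. First, for \eqref{p3} your route (and the paper's) yields the constant $\frac{d}{d-2}\cdot(d-2)\pi=d\pi$, not the printed $\frac{(d-2)^2}{d}\pi$; the latter would require the factor $\frac{d-2}{d}$ from Lemma~\ref{L-34} to sit on the other side of the inequality, and it is moreover hard to reconcile with \eqref{q4}, being strictly smaller than the limit of the expectations. Second, your isoperimetric computation for \eqref{p4} produces the exponent $(d-1)(d-2)/d=d-3+\frac 2d$, which is the one consistent with the conclusion $\underline{\dim}_SZ\ge 2-\frac2d$, whereas the display shows $d-3-\frac2d$, a normalisation under which the quotient would tend to $0$. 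Both points look like slips in the printed statement rather than gaps in your reasoning, but a complete write-up must either derive the printed constants or state explicitly that the method delivers the bound $d\pi$ in \eqref{p3} and the exponent $d-3+\frac2d$ in \eqref{p4}.
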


\begin{proof}
In order to obtain \eqref{p1}, we use a similar method as in the proof of Lemma~\ref{L-34}. Assume, to the contrary, that $r_i\to 0$ is such that
$$\lim_i r_i|\log r_i|\Ha^1(\partial Z_{r_i})> 2\pi+2\ep$$
for some $\ep>0$. Then we have for $i$ sufficiently large, 
\begin{eqnarray*}
\Ha^2(Z_{r_i})&\geq&\sum_{j=i}^\infty \Ha^1(\partial Z_{r_j}) \frac{r_j^2-r_{j+1}^2}{2r_j}\\
&=&\sum_{j=i}^\infty r_j|\log r_j|\Ha^1(\partial Z_{r_j}) \frac{r_j-r_{j+1}}{r_j|\log r_j|}\frac{r_j+r_{j+1}}{2r_j}\\
&\geq& (\pi +\ep)\sum_{j=i}^\infty \frac{r_j-r_{j+1}}{r_j|\log r_j|}\\
&\geq& (\pi +\ep)\sum_{j=i}^\infty \left(\frac 1{|\log r_j|}-\frac 1{|\log r_{j+1}|}\right)\\
&=&(\pi +\ep)\frac 1{|\log r_i|}
\end{eqnarray*}
(the last inequality follows from the concavity of the function $t\mapsto |\log t|^{-1}$ on $[0,e^{-1/2}])$. Hence, $\Ha^2(Z_{r_i})|\log r_i|\geq \pi+\ep$ for sufficiently large $i$, which is a contradiction.

The lower bound \eqref{p2} follows by the isoperimetric inequality. Indeed, we have 
$$4\pi\Ha^2(Z_r)\leq \Ha^1(\partial Z_r)^2,$$
hence,
$$2\sqrt{\pi}\sqrt{|\log r|\Ha^2(Z_r)}
\leq\sqrt{|\log r|}\Ha^1(\partial Z_r),$$
and \eqref{p2} follows using \eqref{q1}.

\eqref{p3} follows from Lemma~\ref{L-34} and \eqref{p4} can be obtained by using the isoperimetric inequality again, as in the proof of Proposition~\ref{isoperi}.
\end{proof}

\end{document}